\newcommand{\R}{\ensuremath{\mathbb{R}}}
\newcommand{\N}{\ensuremath{\mathbb{N}}}
\newcommand{\eps}{\varepsilon} 
\newcommand{\ip}[1]{\langle{#1}\rangle} 
\newcommand{\indic}{\mathbbm{1}} 
\DeclareMathOperator{\E}{\mathbb{E}}
\newcommand{\tower}{\mbox{\rm Tower}}
\newcommand{\edge}{\mbox{\rm Edge}}
\newtheorem{theorem}{Theorem}[section]
\newtheorem{thm}{Theorem}
\newtheorem{cor}[theorem]{Corollary}
\newtheorem{definition}[theorem]{Definition}
\newtheorem{claim}[theorem]{Claim}
\newtheorem{lemma}[theorem]{Lemma}
\begin{document}

\begin{frontmatter}[classification=text]


\author[friedgut]{Ehud Friedgut\thanks{Research supported in part by ISF grant 1168/15 and by Minerva grant 712023.}}
\author[regev]{Oded Regev\thanks{This material is based upon work supported by the National Science Foundation under Grant No.~CCF-1320188. Any opinions, findings, and conclusions or recommendations expressed in this material are those of the authors and do not necessarily reflect the views of the National Science Foundation.}}

\begin{abstract}
We prove that for a large family of product graphs, and for Kneser graphs $K(n,\alpha n)$ with fixed $\alpha <1/2$, the following holds. Any set of vertices that spans a small proportion of the edges in the graph can be made independent by removing a small proportion of the vertices of the graph. This allows us to strengthen the results of \cite{DinurFR06} and \cite{DinurF09}, and show that any independent set in these graphs is almost contained in an independent set which depends on few coordinates. Our proof is inspired by, and follows some of the main ideas of, Fox's proof of the graph removal lemma \cite{Fox11}.
\end{abstract}
\end{frontmatter}

\section{Introduction}
The celebrated triangle removal lemma of Ruzsa and Szemer\'edi  \cite{RS} has a deceptively simple formulation, yet is in fact a deep structural result. It is known to imply Roth's theorem on three term arithmetic progressions, and its generalizations to hypergraphs imply Szemer{\'e}di's theorem on arithmetic progressions. The statement is as follows. 
\begin{thm}[Ruzsa-Szemer{\'e}di]
For every $\eps>0$ there exists a $\delta > 0$ such that if a graph on $n$ vertices spans fewer than $\delta n^3$ triangles, it can be made triangle free by removing at most $\eps n^2$ edges. 
\end{thm}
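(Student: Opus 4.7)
The plan is to follow the classical regularity approach: apply Szemer\'edi's regularity lemma to impose structure on the host graph $G$, then clean it by deleting a small number of edges so that any surviving triangle forces, via a counting argument, far more than $\delta n^3$ triangles in the original graph.

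More concretely, I would fix a small auxiliary parameter $\eps' = \eps'(\eps) \ll \eps$ and apply the regularity lemma to $G$ with regularity parameter $\eps'$ and a suitable lower bound on the number of parts, producing an equitable partition $V(G) = V_1 \cup \dots \cup V_k$ in which all but an $\eps'$-fraction of the pairs $(V_i,V_j)$ are $\eps'$-regular. I would then form a subgraph $G'$ by deleting three types of edges: (i) all edges inside a part $V_i$, (ii) all edges spanning a non-regular pair $(V_i,V_j)$, and (iii) all edges spanning a regular pair whose density is below some threshold $\eta = \eta(\eps)$. A direct count shows that each type contributes at most $\tfrac{\eps}{3} n^2$ deleted edges provided $\eps'$ and $\eta$ are chosen small relative to $\eps$, so the total number of edges removed is at most $\eps n^2$, as required.

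Next I would argue that the cleaned graph $G'$ is triangle-free whenever $\delta$ is taken small enough. Suppose, toward a contradiction, that $G'$ contains a triangle with vertices in parts $V_i, V_j, V_k$ (not necessarily distinct, but in fact distinct by step (i)). Then each of the three pairs $(V_i,V_j)$, $(V_j,V_k)$, $(V_i,V_k)$ is $\eps'$-regular with density at least $\eta$. The standard counting/embedding lemma for regular triples then produces at least $(1-o_{\eps'}(1))\,\eta^3 \, |V_i|\,|V_j|\,|V_k|$ triangles in $G$, which is on the order of $\eta^3 (n/k)^3$. Choosing $\delta$ smaller than, say, $\tfrac12 \eta^3 / k^3$ contradicts the hypothesis that $G$ has fewer than $\delta n^3$ triangles. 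Hence no such triangle exists, so $G'$ is triangle-free and the removal of at most $\eps n^2$ edges suffices.

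The main subtlety is the order of quantifiers and parameter selection: $\eta$ must be chosen in terms of $\eps$ to control the low-density deletions, $\eps'$ must then be chosen small enough relative to both $\eps$ and $\eta$ so that the counting lemma yields a genuinely positive triangle count, and only afterwards can the regularity lemma be applied, producing a bound on $k$ that depends on $1/\eps'$ in a tower-type fashion. This dependence of $k$ on $\eps$ in turn forces $\delta$ to be extremely small (tower-type in $1/\eps$), which is the well-known weak point of this proof, but it is not an obstruction to the qualitative statement. The only step requiring genuine care is checking that the three cleaning operations each remove only $O(\eps) n^2$ edges, which is a routine calculation using the equitability of the partition and the definition of $\eps'$-regularity.
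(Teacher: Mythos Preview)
Your outline is the standard regularity-lemma proof of the triangle removal lemma and is correct as sketched; the parameter hierarchy and the three-step cleaning are exactly the classical argument, and the counting lemma step is stated properly.

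However, there is nothing to compare against: the paper does not prove this theorem. It is quoted in the introduction as the classical Ruzsa--Szemer\'edi result, with a citation to~\cite{RS}, purely to motivate the paper's own removal-type results for product and Kneser graphs. The paper's actual contributions (Theorems~\ref{thm:main} and~\ref{thm:Kneser}) are proved by a different, Fox-inspired potential/entropy argument tailored to the product-graph setting, not via Szemer\'edi regularity, and they do not re-derive the triangle removal lemma itself.
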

This statement can be formulated as a statement regarding a 3-uniform hypergraph whose vertices are the edges of the complete graph on $n$ vertices, and whose edges are the triangles. The statement then says that every set of vertices that spans few edges can be made independent by removing a small number of vertices.

In this paper we take this statement ``one level down'' to graphs. We show that a large family of graphs has an ``edge removal phenomenon,''
i.e., any set of vertices that spans $o(|E|)$ edges can be made independent by removing $o(|V|)$ vertices.
In particular, we shall prove this for product graphs (see Theorem~\ref{thm:main} for the precise statement), 
a good example of which is $K_3^n$, in which the set of vertices is $\{0,1,2\}^n$, and two vertices span an edge if they differ in all coordinates. (Note that this is very different from the Hamming graph, where neighbors differ in precisely one coordinate.) 
We will also prove this for Kneser graphs $K(n, k)$ (see Theorem~\ref{thm:Kneser}), where the vertices are the subsets of size $k$ of $[n]$ for some $0 < k < n/2$, and two vertices span an edge if they are disjoint. Our proof works as long as the ratio $k/n$ is bounded away from zero. 

So, as in Swiss cheese, although these graphs have plenty of very large holes (independent sets), there are no ``uniformly sparse sets'': any sparse set is nothing else than a part of one of the big independent sets, with a small perturbation.  

\paragraph{Juntas.}
The context in which we encountered this problem was in trying to characterize independent sets in
graph products. In $K_3^n$, for instance, the largest independent sets are those obtained by fixing a single coordinate (``dictatorships''). Other examples of independent sets that depend on few coordinates (so called ``juntas'') are, say, all vertices that have at least two ``0'' entries in their first three coordinates. 
Similarly, in the Kneser graph, the largest independent sets are determined by a single ``coordinate'', namely, all sets containing a specific element (this is part of the  Erd\H{o}s-Ko-Rado theorem). And, as in $K_3^n$, there exist junta-like independent sets capturing a constant proportion of the vertices, which are determined by few elements.

In \cite{DinurFR06} and \cite{DinurF09} it is proven that {\em any} independent set in these graphs can essentially be captured by a set depending on few coordinates. E.g., for every $\eps>0$ there exist a $\delta > 0$ and a positive integer $j$ such that for any independent (or very sparse) set  $U \subset V(K_3^n)$ there exists a set of coordinates $J \subset [n]$ with $|J| \leq j$ and a set $T \subset \{0,1,2\}^J$ such that all but $\eps 3^n$ of the vertices in $U$ have their $J$-coordinates in $T$. Furthermore, this set $T$ ``explains'' why $U$ is independent, because $T$ itself is extremely sparse in the graph $K_3^J$. A similar statement was proven there for a large class of product graphs, and for the Kneser graphs.

However, there was a fly in this ointment (or a thorn in the sheep-tail-fat, as we say): we conjectured that there must exist a set $T$ as above that is not only sparse in the product graph of dimension $j$, but actually independent, thus providing a  {\em complete} explanation for the sparseness of $U$; i.e., we conjectured that any independent (or very sparse) set is almost completely contained in 
a set depending on few coordinates that is truly independent (as opposed to merely sparse).
In this paper we manage to settle this issue and prove the conjecture.
The key to this is applying the main theorem in this paper, Theorem \ref{thm:main}, to the set $T$, which belongs to the $j$'th power of the base graph. We show that the edge removal phenomenon holds in the product graph, thus $T$ can be slightly altered to produce a {\em truly independent set} (as opposed to a sparse one). 

\paragraph{Related work.}
The structure of our proof is very closely modeled on Fox's proof of the graph removal lemma, \cite{Fox11}, where he improved the longstanding bound on the dependence of the constants in the lemma (most famously, in the triangle removal lemma). One of the differences is that we have, to use Fox's terminology, a shattering lemma that is special to this setting, and is nothing else than a (minor) generalization of the main result of \cite{DinurFR06} (Theorem \ref{thm:bipartitedfr}). It states that whenever two large sets of vertices span few edges between them, it is because there is a small set of coordinates that these two sets are strongly correlated with. This is a consequence of a central theorem in  
\cite{DMR}, which in turn, relies heavily on the invariance principle of \cite{MOO05}.

Our main result is also reminiscent of, and related to, removal lemmas in groups, see \cite{HatamiST13}, \cite{KralSV13},
and for systems of equations over finite fields, \cite{KralSV12}.
 Theorem \ref{thm:Kneser}, which deals with Kneser graphs, is closely related to the work \cite{DasT16}, 
where a removal lemma is proven for the special case of sets that are close in size to a maximal independent set in the Kneser graph $K(n,k)$, for any value of $k$.

\paragraph{Structure of the paper.} 
In Section~\ref{sec:prelim} we present some preliminary definitions.
In Section~\ref{sec:mainthm} we state our main theorem and sketch the proof. 
In Section~\ref{sec:matching} we reduce to a case of a ``matching like'' function, which will eventually lead to a substantial improvement in the resulting bounds.
In Section~\ref{sec:potential} we show that any non-negative function has a good approximation (in a specific sense) by a function that depends on few coordinates. 
In Section~\ref{sec:sparse} we complete the proof of our main theorem.
In Section~\ref{sec:Kneser} we state and prove the Swiss Cheese Theorem regarding Kneser graphs, showing that they too exhibit an edge removal phenomenon. 
Finally, in the appendix we explain how to extend the main theorem from \cite{DinurFR06} to the form that we use in this paper.

\section{Preliminaries}\label{sec:prelim}

In the rest of the paper we fix a set $V$ with a reversible, irreducible, aperiodic Markov chain on it given by a matrix
$A$. 
All functions and constants we encounter from now on may depend on $V$ and $A$. 
Let $\mu$ denote the unique stationary measure of $A$ on $V$. We will be working with $V^n, A^{\otimes n}$ and $\mu^{\otimes n}$, often just writing $A$ and $\mu$ as shorthand to avoid cumbersome notation.
Whenever we take expectation of a function on $V^n$ it is according to $\mu$, and we use $\mu$ also to define the standard inner product between functions on $V^n$. We think of the ground set $V^n$ as the vertices of a (product) graph, and of the non-zero-probability transitions as edges. The weight of a (directed) edge $(x,y)$ is 
\[
w_{(x,y)} := \ip{\indic_{x},A\indic_{y}} \; .
\] 
This is the asymptotic probability of a step in the random walk governed by $A$ to traverse $(x,y)$.
Equivalently it is the probability of $(u,w)=(x,y)$, where $u$ is chosen by the stationary distribution, and $w$ is chosen from $u$'s neighbors according to the probabilities dictated by the transition matrix.
We will talk about ``the weight of the edges spanned'' by a set $U$, or a function $f$, simply meaning 
  $\ip{\indic_U,A\indic_U} $, or $\ip{f,Af} $. Consequently, a set $U \subset V^n$ is called {\em independent} if $\ip{\indic_U,A\indic_U} =0$.
We will say that a function $g: V^n \to [0,1]$ is $\eps$-far from independent if for every independent set $U$ 
we have $\E[ \indic_{\overline{U}}\cdot g ] > \eps$.

\paragraph{Capturing sparse sets using juntas.}
A crucial ingredient in our proof is the following variant on the main result from~\cite{DinurFR06}. 
In the appendix we explain how it follows from previous work. 

\begin{theorem}\label{thm:bipartitedfr}
For all $0 < \eps \le 1/2$ there exist $\delta>0, j\ge 1$, such that the following holds.
For all $n \ge 1$ and $f_1,f_2:V^n \to [0,1]$ such that $\ip{f_1,Af_2} \le \delta$, there exist $J \subseteq [n]$, $|J| \le j$, and $T_1,T_2 \subseteq V^J$ such that 
\begin{equation}
\label{eq:outsidetlowexpect}
\E_{x \in V^J}[\indic_{\overline{T_1}}(x) \E [f_1(x,\cdot)]] \le \eps, 
 \qquad \E_{x \in V^J}[\indic_{\overline{T_2}}(x) \E [f_2(x,\cdot)]] \le \eps,
\end{equation}
and $\ip{\indic_{T_1}, A\indic_{T_2}} \le \eps$.
Moreover, one can take $\delta=\delta_1(\eps):= \eps^c$ and $j=j_1(\eps):=\eps^{-c}$
where $c>0$ is a constant depending only on $A$.
\end{theorem}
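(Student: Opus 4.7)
The statement is a bipartite generalization of the main structural theorem of \cite{DinurFR06}, which handles the symmetric case $f_1 = f_2$. I would attempt two routes, expecting the second to be the one that yields the required polynomial parameters.

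\emph{Route A (reduction to the symmetric case).} Let $W = V \times \{1,2\}$ with the uniform-in-selector product measure, and define a reversible, irreducible, aperiodic Markov chain $A'$ on $W$ that applies $A$ to the $V$-component while swapping the selector (plus a small lazy perturbation to restore aperiodicity). Lift $(f_1, f_2)$ to $F \colon W^n \to [0,1]$ by setting $F$ equal to $f_1$ on the selector-pattern region where $1$'s dominate and to $f_2$ on the region where $2$'s dominate. A short calculation shows that $\ip{F, (A')^{\otimes n} F}_W$ equals $\ip{f_1, A^{\otimes n} f_2}_V$ up to controllable error from near-balanced selector patterns, while $\E[F] = (\E[f_1]+\E[f_2])/2$ is non-degenerate. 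Applying the symmetric theorem of \cite{DinurFR06} to $F$ then produces a junta $J \subseteq [n]$ and $T \subseteq W^J$; slicing $T$ by the selector values on $J$ recovers candidate sets $T_1, T_2 \subseteq V^J$.

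\emph{Route B (direct adaptation of the DinurFR06 argument).} That proof is an iterated invariance-principle argument whose engine is the Dinur--Mossel--Regev theorem, which is already bilinear: it asserts that $\ip{g_1, A^{\otimes n} g_2} \approx \E[g_1] \E[g_2]$ unless both $g_1$ and $g_2$ concentrate substantial low-degree influence on a common small set of coordinates. Reducing to the nontrivial case $\E[f_1], \E[f_2] \ge 2\eps$ (else one may take the corresponding $T_i = \emptyset$), the hypothesis $\ip{f_1, A f_2} \le \delta \ll \E[f_1] \E[f_2]$ forces DMR to produce such a common set $J_1$ of influential coordinates. Conditioning on the values at $J_1$ partitions $V^n$ into cells; on most cells the restricted bipartite inner product is again small, so one recurses, pooling all conditioning coordinates into a single junta $J$ and reading off $T_1, T_2$ from which cells contain significant $f_1$- and $f_2$-mass respectively.

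The principal obstacle is the quantitative tracking. To guarantee $\delta = \eps^c$ and $j = \eps^{-c}$, the recursion in Route B must terminate in $\eps^{-O(1)}$ rounds, which forces a potential function (naturally a sum of squares of conditional means) to drop by a polynomial-in-$\eps$ amount each round. A secondary subtlety is that the junta $J$ must be produced \emph{jointly} for $f_1$ and $f_2$ rather than as two separate juntas; this is exactly what the bilinear form of DMR supplies, so the \cite{DinurFR06} argument should extend almost verbatim, with the appendix presumably just spelling out the substitution of $\ip{f_1, A f_2}$ for $\ip{f, A f}$ throughout the original proof.
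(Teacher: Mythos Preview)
Your two routes both differ from the paper's argument, and neither quite locates where the polynomial bounds actually come from.

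\emph{On Route A.} The paper does reduce the bipartite statement to the single-function case, but by a much simpler device than your selector-doubling: it takes $a_1,a_2 \in V^2$ connected by an edge and with no self-loops, and defines $f:V^{n+2}\to[0,1]$ by $f(a,x)=f_1(x)$ if $a=a_1$, $f(a,x)=f_2(x)$ if $a=a_2$, and $0$ otherwise, so that $\ip{f,Af}=2w\ip{f_1,Af_2}$ exactly. Your construction on $W=V\times\{1,2\}$ introduces an error term from near-balanced selector patterns that is not obviously controllable (the measure is in fact \emph{concentrated} near balance), and you would also need to argue that the junta produced for $F$ on $W^J$ cleanly separates into $T_1,T_2\subseteq V^J$.

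\emph{On Route B.} The paper's direct proof is not an iterated conditioning with a sum-of-squares potential. It is a one-shot argument: pass to noisy versions $g_i=N_\eta f_i$, take $J$ to be the set of coordinates with influence at least some threshold $\gamma$ on either $g_1$ or $g_2$ (bounded in size since noisy functions have bounded total influence), and set $T_i=\{a\in V^J:\E[g_{i,a}]\ge\eps\}$. The inequality $\ip{\indic_{T_1},A\indic_{T_2}}\le\eps$ is then proved by contradiction: if it failed, the bilinear invariance-principle statement applied to many pairs $(g_{1,a},g_{2,b})$ would produce a common influential coordinate outside $J$, and a hypercontractivity lemma promotes this to a coordinate with influence exceeding $\gamma$ on $g_1$ or $g_2$, contradicting the definition of $J$.

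\emph{The main gap.} You correctly anticipate that the chief difficulty is the quantitative dependence $\delta=\eps^c$, $j=\eps^{-c}$, but you misdiagnose its source. It does not come from making a recursion terminate in polynomially many rounds; it comes from replacing the Dinur--Mossel--Regev invariance statement by its quantitative strengthening due to Dinur and Shinkar, which gives $\delta_{\textsc{moo}}(\eps)=\eps^c$ and $\tau_{\textsc{moo}}(\eps)=\eps^c$. The original DMR bounds are superpolynomial, and the remark in \cite{DinurFR06} accordingly yields only superpolynomial $\delta_1,j_1$; no restructuring of the outer argument repairs this without the improved inner lemma. Your proposal does not mention this ingredient, and without it the ``moreover'' clause cannot be obtained by either route.
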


\section{Main Theorem}\label{sec:mainthm}

\begin{theorem}\label{thm:main}
For all $\eps>0$ there exists $\delta >0$ such that the following holds. 
If $g : V^n \to [0,1]$ is $\eps$-far from independent then 
 $\ip{g,Ag} > \delta$. Moreover, we can take $\delta = \delta_2(\eps) := 1/\tower(O( \log(1/\eps)))$.
\end{theorem}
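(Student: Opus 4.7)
The plan is to prove the contrapositive: assume $\ip{g, Ag} \le \delta$ and construct an independent set $U \subseteq V^n$ with $\E[\indic_{\overline{U}} \cdot g] \le \eps$. The proof is modeled on Fox's refinement of the triangle-removal lemma and proceeds in three coordinated stages corresponding to Sections~\ref{sec:matching}--\ref{sec:sparse}.

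Following Section~\ref{sec:matching}, I would first reduce the statement to the case where $g$ has a ``matching-like'' structure, meaning informally that the edges contributing to $\ip{g,Ag}$ are essentially vertex-disjoint. This reduction is what ultimately yields the tower-of-logarithmic-height bound (rather than something worse), because once $g$ is matching-like, each residual edge in a would-be independent approximation can be destroyed by removing a single vertex at negligible cost. Following Section~\ref{sec:potential}, I would then prove that any non-negative function on $V^n$ is well approximated (in an $L^2$-type sense) by a function depending on a bounded number of coordinates. This plays the role of Szemer\'edi-type regularization in Fox's proof and is obtained by iterating the shattering lemma (Theorem~\ref{thm:bipartitedfr}): at each step, if the current junta approximation of the function is poor, one applies Theorem~\ref{thm:bipartitedfr} to the residual to produce a strictly better junta, with an $L^2$-energy-increment argument bounding the number of iterations.

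Finally, following Section~\ref{sec:sparse}, I would combine the two ingredients. Applying the junta approximation to the matching-like $g$ yields a small coordinate set $J$ and a set $T \subseteq V^J$ that captures most of $g$'s mass, together with the bound $\ip{\indic_T, A\indic_T}$ very small. If $T$ were independent in $V^J$, then $\{x : x_J \in T\}$ would already be an independent set in $V^n$ containing most of $g$'s mass. In general $T$ is only edge-sparse, not independent; but the matching-like structure of $g$ forces the residual edges inside $T$ to be essentially a matching, and one can delete one endpoint of each matched pair at small vertex cost. The surviving set, pulled back to $V^n$, is the desired independent $U$.

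The main obstacle is the quantitative bookkeeping. Because Theorem~\ref{thm:bipartitedfr} has only polynomial parameters ($\delta_1(\eps) = \eps^c$ and $j_1(\eps) = \eps^{-c}$), a single application is inexpensive, but the $L^2$-energy-increment iteration of Section~\ref{sec:potential} requires on the order of $\log(1/\eps)$ refinements, and each refinement of the junta must be performed with precision that is roughly exponentially small in the current junta size (since we must maintain uniformity over fibers in $V^J$). Composing these blowups produces a tower of height $O(\log(1/\eps))$ in the final dependence $\delta_2(\eps) = 1/\tower(O(\log(1/\eps)))$. The most delicate point is to arrange the parameters so that the $L^1$-errors accumulated during the iteration, together with the vertex loss incurred by the matching reduction, together sum to at most $\eps$ while still producing a truly independent (rather than merely sparse) set at the end; this is exactly where the matching-like reduction of Section~\ref{sec:matching} is needed to prevent the bound from becoming doubly tower.
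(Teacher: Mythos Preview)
Your proposal captures the three-stage architecture (matching reduction, potential/regularization, combination) but inverts the role of the matching-like reduction, and as a result the final step is circular.

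In the paper, ``matching-like'' does not mean the edges in the support of $g$ are vertex-disjoint; it means (Definition~\ref{def:balanced}) that no independent set carries more than half of $f$'s mass. The reduction (Claim~\ref{clm:maxmatching}) produces such an $f \le g$ with the additional property that $U:=\{x:f(x)<g(x)\}$ is already independent --- that is where the independent set in the contrapositive comes from, immediately, before any junta argument is run. From that point on the task is purely to show $\ip{f,Af}$ is large. In the potential iteration (Section~\ref{sec:potential}, which uses the entropy functional $H(f,I)=\E_x[\varphi(\E[f(x,\cdot)])]$ with $\varphi(t)=t\log t$, not an $L^2$ energy), the matching-like property is invoked \emph{inside} each refinement step (Lemma~\ref{eq:shatterer}): one sweeps over all edges $(x_1,x_2)$ in $V^I$, applies Corollary~\ref{cor:shattering} to each pair of fibers, and places one endpoint of each edge into $S$. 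Then $\overline{S}$ is independent in $V^I$, so by matching-likeness $S$ carries at least half of $f$'s mass, which forces an entropy increment of $\alpha/128$ per step and yields $O(\log(1/\eps))$ iterations rather than $O(1/\eps)$.

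Your plan instead postpones the matching idea to the end, where you want to clean up a sparse $T \subseteq V^J$ into an independent one by deleting one endpoint of each residual edge. But ``$T$ is edge-sparse in $V^J$; make it independent at small vertex cost'' is precisely Theorem~\ref{thm:main} applied to $\indic_T$ on $V^J$; invoking it there is circular. (Indeed, this is exactly how Corollary~\ref{cor:ashkara} is derived \emph{from} Theorem~\ref{thm:main}, not a step in its proof.) Nothing about $g$ being matching-like on $V^n$ forces the residual edges in $T \subseteq V^J$ to form a matching, so the cheap vertex-deletion you describe is not available. The fix is to move the matching-like property to where the paper uses it: as the device that guarantees every refinement step makes substantial progress in the entropy potential.
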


This easily implies the desired strengthening of \cite{DinurFR06} which was the main motivation for this work. 

\begin{cor}\label{cor:ashkara}
For all $\eps>0$ there exist a $\delta>0$ and a positive integer $j$ such that the following holds.
Let $g : V^n \to [0,1]$, with $\ip{g,Ag} < \delta$. Then there exists $J \subset [n]$, with $|J| \le j$, and 
$T \subseteq V^J$ such that 
\begin{enumerate}
\item $\ip{\indic_T,A\indic_T} =0$, and
\item $
\E_{x \in V^J}[\indic_{\overline{T}}(x) \E[g(x,\cdot)]] \le \eps
$.
\end{enumerate}
Moreover, we can take $\delta = \delta_1(\delta_2(\eps/2))$ and $j = j_1(\delta_2(\eps/2))$.
\end{cor}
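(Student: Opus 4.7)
The plan is a two-step reduction combining Theorems~\ref{thm:bipartitedfr} and~\ref{thm:main}. I would first use Theorem~\ref{thm:bipartitedfr} to localize the sparseness of $g$ to a bounded coordinate set $J$, producing a ``candidate junta'' $T''\subseteq V^J$ that already spans few edges of $V^J$ and captures most of $g$, and then apply Theorem~\ref{thm:main} \emph{inside} the low-dimensional product $V^J$ to perturb $T''$ into a nearby, genuinely independent set $T$.

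Concretely, I would set $\eps_0:=\delta_2(\eps/2)$ and invoke Theorem~\ref{thm:bipartitedfr} with $f_1=f_2=g$ and error parameter $\eps_0$. The hypothesis $\ip{g,Ag}<\delta=\delta_1(\eps_0)$ is exactly what that theorem requires, so it returns $J\subseteq[n]$ with $|J|\le j_1(\eps_0)=j$ together with sets $T_1,T_2\subseteq V^J$ satisfying its conclusions. I would then set $T'':=T_1\cap T_2$. Because $\indic_{T''}\le \indic_{T_i}$ pointwise and $A$ has non-negative entries,
\[
\ip{\indic_{T''},A\indic_{T''}}\le \ip{\indic_{T_1},A\indic_{T_2}}\le \eps_0,
\]
and a union bound on $\overline{T''}=\overline{T_1}\cup\overline{T_2}$ gives $\E_{x\in V^J}[\indic_{\overline{T''}}(x)\,\E[g(x,\cdot)]]\le 2\eps_0$.

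Next I would apply Theorem~\ref{thm:main} to $\indic_{T''}\colon V^J\to\{0,1\}$ with parameter $\eps/2$. Its contrapositive, together with the bound $\ip{\indic_{T''},A\indic_{T''}}\le\delta_2(\eps/2)$, produces an independent $T\subseteq V^J$ with $\E_{x\in V^J}[\indic_{\overline{T}}(x)\indic_{T''}(x)]\le \eps/2$. This $T$ satisfies item~(1) of the corollary immediately; for item~(2) I would decompose according to whether $x\in T''$:
\[
\E_{x\in V^J}[\indic_{\overline{T}}(x)\,\E[g(x,\cdot)]]\le \E_x[\indic_{\overline{T''}}(x)\,\E[g(x,\cdot)]]+\E_x[\indic_{T''\setminus T}(x)]\le 2\eps_0+\eps/2,
\]
using $\E[g(x,\cdot)]\le 1$ in the second term; the right-hand side is at most $\eps$ because $\eps_0=\delta_2(\eps/2)$ is tower-small in $\eps$.

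The argument is mostly bookkeeping once the two-step structure is in place. The one subtle point I would check carefully is that Theorem~\ref{thm:main}, stated on $V^n$ for arbitrary $n$, is legitimately applied in the ambient $V^J$, and that independence of $T\subseteq V^J$ coincides with independence of its lift to $V^n$. Both facts follow from the tensor decomposition $A^{\otimes n}=A^{\otimes J}\otimes A^{\otimes[n]\setminus J}$ together with the fact that $A^{\otimes[n]\setminus J}$ fixes the constant function $1$, which collapses all $V^n$-inner products of $J$-junta functions to the corresponding $V^J$-inner products.
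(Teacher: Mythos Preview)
Your proposal is correct and follows essentially the same two-step argument as the paper: apply Theorem~\ref{thm:bipartitedfr} with $f_1=f_2=g$ at level $\eps_0=\delta_2(\eps/2)$ to obtain a bounded-coordinate capturing set, then apply Theorem~\ref{thm:main} in $V^J$ to prune it to an independent set. The only cosmetic difference is that the paper tacitly works with a single $T'$ (as in the symmetric one-function case of Theorem~\ref{thm:bipartitedfr}) and so gets the cleaner estimate $\eps_0+\eps/2\le\eps$, whereas your intersection $T''=T_1\cap T_2$ costs an extra $\eps_0$; since $\eps_0$ is tower-small this is harmless.
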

\begin{proof} 
Invoke Theorem \ref{thm:bipartitedfr} with $f_1=f_2=g$ and $\eps$ taken to be $\delta_2(\eps/2)$ to produce $J$, with $|J| \le j$, 
and $T' \subset V^J$ satisfying that
\[
\E_{x \in V^J}[\indic_{\overline{T'}}(x) \E [g(x,\cdot)]] \le \delta_2(\eps/2) \le \eps/2,
\]
and $\ip{\indic_{T'}, A\indic_{T'}} \le \delta_2(\eps/2)$. Then invoke Theorem~\ref{thm:main} on $\indic_{T'}$ to find an independent set $T \subset T'$ 
with 
\[
\E[\indic_{T'} -\indic_{T}] \leq \eps/2 \; .
\] 
It follows that $T$ satisfies conditions (1) and (2), as required. 
\end{proof}

\subsection{Sketch of proof of the main theorem} 

We begin the sketch for the case of sparse sets (i.e., for $g$ having range $\{0,1\}$), as that captures all the main ideas. We will briefly mention the extension to functions at the end of this subsection.
Given a set $U \subset V^n$ which is $\eps$-far from being independent we wish to show that it spans many edges, i.e., that $\ip{\indic_U,A\indic_U}$ is large.

Notice that any set $I \subset [n]$ naturally defines a partition of $V^n$ into $|V|^{|I|}$ parts according to the coordinates in $I$. We can then study how $\indic_U$ behaves on these parts. If it is constant (zero or one) on 
each part, then $I$ perfectly captures $U$. Letting $W$ be a random variable whose value is the conditional expectation of $\indic_U$ on a random part of the partition, and letting $H:=H(U,I)$ be the expectation of $W \log(W)$, then $H$ is a good indication of how well $I$ captures $U$. When $I =\emptyset$ then $H = \alpha \log(\alpha)$, where $\alpha$ is the measure of $U$. Moreover, $H=0$ if and only if $\indic_U$ is completely determined by the coordinates in $I$. Furthermore, $H$ is always non-positive, and is monotone increasing with respect to refining the partition induced by $I$ by adding further coordinates. 
For a positive integer $r$, an $r$-refinement of a partition induced by $I$ as above, is one which involves adding $r$ new coordinates per every part of the partition, thus adding up to $r \cdot |V|^{|I|}$ new coordinates.

Given a set $U$, beginning with the trivial partition of $V^n$ (corresponding to $I_0 = \emptyset$) we will iteratively apply $r$-refinements, producing $I_0 \subset I_1\subset \cdots$ attempting to substantially increase $H(U,I_i)$ in each step, and stop when this is no longer possible. On the one hand, since $H\le 0$, the number of steps, and hence the total number of coordinates involved in the final partition is bounded from above by some constant $k$. On the other hand, we will show that if $U$ is $\eps$-far from an independent set, and $\ip{\indic_U,A\indic_U} = \delta  < \delta(\eps,k)$ then for any partition coming from at most $k$ coordinates there is an $r$-partition that further increases $H$ substantially. So if $\delta$ is too small this yields a contradiction.
In the proof we will sketch the exact dependence between all parameters  involved (including $r$ and $k$).

The crux of the proof, then, is how one can utilize the fact that $U$ is sparse (i.e., that $\delta$ is small), $U$ is $\eps$-far from being independent, and $|I|$ is not too large, in order to show the existence of an $r$-refinement which substantially increases $H(U,I)$. Our engine for this is (a slight variation on) the result from \cite{DinurFR06}, where the fuel of this engine is the invariance principle of \cite{MOO05} (as applied in \cite{DMR}). Whenever two parts of the partition, say $X$ and $Y$, span few edges between them, our engine will produce a refinement of the partition, according to a bounded number of new coordinates, such that on at least one of the two parts, say $X$, the resulting increase in $H$ will be proportional to the measure of $U \cap X$.  This approach is sufficient to prove our main theorem, with a $\tower(O(1/\eps))$-type dependence between $\eps$ and $\delta$.
 
 However, using a key idea from Fox's improvement to the bounds in the graph removal lemma, \cite{Fox11}, we can cut this dependence down to $\tower(O(\log(1/\eps)))$. This involves replacing the set $U$ with a subset $U'$, the support of a maximal matching contained in it. The advantage of $U'$ is that no independent set contained in it captures more than half of its mass. As the proof will show, this helps avoid slowly whittling away at $U$, and speeds up the relative increase in $H$ in every step.

 One last element in the proof of our main theorem: the use of functions with range $[0,1]$ instead of sets. In the final part of the paper, when applying our result to sets in Kneser graphs, we will ``extrapolate'' these sets  to $[0,1]$-valued functions on $\{0,1\}^n$. Therefore it makes sense for us to make a few small adaptations in our presentation, replacing sets (which may be thought of as functions with range $\{0,1\}$) with functions with range $[0,1]$.  It turns out that this natural variant is not much harder to treat than the original one.
 

\section{Reducing to matching-like functions}\label{sec:matching} 

Here we observe that it suffices to prove Theorem \ref{thm:main} in the special case that $f$ is a \emph{matching-like function}, which is a function not having too much weight on any independent set. The use of this innocuous condition, which replaces the condition of being far from independent, leads to a substantial improvement.

\begin{definition}\label{def:balanced}
Let $f:V^n \to [0,1]$. 
We say that $f$ is \emph{matching-like} if for any
independent set $W \subseteq V^n$, 
\[
\E_x[\indic_W(x) f(x)] \le \E[f]/2.
\]
\end{definition}
The terminology comes from the observation that when $\mu$ is the uniform measure, the indicator function of the vertices touched by any matching is ``matching-like,'' since no independent set can contain both ends of an edge.

The following claim shows that for any function $g: V^n \to [0,1]$  we can find a matching-like function $f$ such that
$g \ge f$ pointwise, and such that the set of vertices $x$ such that $f(x) < g(x)$ is an independent set. 
We note that when the measure $\mu$ is the uniform measure, and $g$ is the indicator of a set $U$, then 
we can simply take any maximal matching inside $U$ and let $f$ be the indicator of its vertices. The three properties below
are then easy to verify.

\begin{claim}\label{clm:maxmatching}
For any $g: V^n \to [0,1]$ there exists a function $f:V^n \to [0,1]$ satisfying that
\begin{enumerate}
\item
$f(x) \le g(x)$ for all $x.$ 
\item
$f$ is matching-like, and
\item
$\{x: f(x) < g(x) \}$ is an independent set. 
\end{enumerate}
\end{claim}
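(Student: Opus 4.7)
The plan is to generalize the discrete maximal-matching construction noted just before the claim (valid when $g=\indic_U$ and $\mu$ is uniform) to a \emph{fractional} maximal matching in $g$. The central object I introduce is a symmetric weight $q\colon V^n\times V^n\to \R_{\ge 0}$ supported on the edges of $A$ (so $q(x,y)=0$ whenever $A(x,y)=0$) that satisfies the capacity constraint $\sum_y q(x,y) \le g(x)\mu(x)$ for every $x$. From such a $q$ I define the candidate by $f(x):=\mu(x)^{-1}\sum_y q(x,y)$; property~(1), $f\le g$, follows immediately from the capacity constraint (using that irreducibility of $A$ gives $\mu(x)>0$).

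To secure property~(3), I take $q$ to be any maximizer of the total weight $\sum_{x,y}q(x,y)$ over the feasible region, which exists by compactness. Let $S:=\{f<g\}=\{x : \sum_y q(x,y) < g(x)\mu(x)\}$ be the slack set. If $S$ contained an edge $(x,y)$ of $A$, then bumping $q(x,y)$ and $q(y,x)$ up by a small $\delta>0$ would yield a feasible $q$ with strictly larger total weight, contradicting maximality; hence $S$ is independent. (Any vertex $x$ with a self-loop $A(x,x)>0$ is automatically saturated via the $q(x,x)$ term at the maximum, and so does not lie in $S$, consistent with the fact that such a vertex cannot belong to any independent set.)

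Property~(2) is where the symmetric framing pays off. For any independent set $W$,
\[
\E[\indic_W f] \;=\; \sum_{x\in W}\sum_y q(x,y) \;=\; \sum_{x\in W,\,y\notin W} q(x,y),
\]
since $q(x,y)>0$ forces $(x,y)$ to be an edge of $A$, which together with the independence of $W$ rules out $y\in W$. By the symmetry $q(x,y)=q(y,x)$, this sum also equals $\sum_{x\notin W,\,y\in W}q(x,y)$, so the two disjoint contributions together make up at most the full total $\E[f]=\sum_{x,y}q(x,y)$, yielding $\E[\indic_W f]\le \E[f]/2$. I do not expect a real obstacle here: the only subtlety is calibrating where $\mu$ enters, and reversibility (which makes the edge weights $\mu(x)A(x,y)$ symmetric in $(x,y)$) makes the pairing of $\mu$ on the capacity side against $\mu^{-1}$ in the definition of $f$ the natural choice.
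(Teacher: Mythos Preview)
Your proof is correct, but it takes a genuinely different route from the paper's. The paper does not introduce a fractional matching $q$; instead it maximizes $\sum_x f(x)$ directly over the (compact, nonempty) set of functions satisfying (1) and (2), and then derives (3) by perturbation: if $\{f<g\}$ contained an edge $\{a,b\}$, one adds $\gamma/\mu(a)$ to $f(a)$ and $\gamma/\mu(b)$ to $f(b)$, so that $\E[f]$ rises by $2\gamma$ while the weight on any independent set rises by at most $\gamma$, keeping (2) intact and contradicting maximality.

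Your approach trades a slightly larger setup (the auxiliary symmetric $q$, the capacity constraints, the definition $f=\mu^{-1}\sum_y q(\cdot,y)$) for a structural payoff: property~(2) falls out of the symmetry of $q$ for \emph{every} feasible $q$, not just the optimal one, so the perturbation step only has to check the capacity constraint. This is the honest generalization of the maximal-matching picture mentioned before the claim, and it makes transparent why the factor $1/2$ appears. The paper's argument, by contrast, avoids the auxiliary object entirely and handles (2) and (3) in one short perturbation, at the cost of having to re-verify (2) after the bump. Both arguments rely on compactness plus an edge-bump contradiction; they differ in what is being maximized ($\sum_{x,y}q(x,y)$ versus $\sum_x f(x)$) and in where the work for (2) is done.
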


\begin{proof}
Note that the set of all functions $f$ that fulfill conditions (1) and (2) is non-empty (as it contains the identically $0$ function), and compact. Hence there  exists some $f$ in this set which maximizes $\sum f(x)$. This $f$ must also fulfill condition (3), as otherwise we would have an edge $\{a,b\}$ with $f(a) < g(a)$ and $f(b) < g(b)$. If such an edge were to exist, there would exist some small positive constant $\gamma$ such that we could add $\gamma/\mu(a)$ to $f(a)$, and 
$\gamma/\mu(b)$ to $f(b)$, yielding a new function $f'$ that still fulfills condition (1) and (3). This function will  also fulfill condition (2) since its expectation is greater than that of $f$ by $2\gamma$, but its weight on any independent set is 
greater by at most $\gamma$ (because no independent set contains both $a$ and $b$.) So the existence of $f'$ contradicts the maximality of $f$.
\end{proof}

We now state a theorem quite similar to our main one, Theorem \ref{thm:main}, for the special case of matching-like functions, and then, using Claim~\ref{clm:maxmatching}, we can easily deduce Theorem \ref{thm:main}.

\begin{theorem}\label{thm:mainbalanced}
For all $\eps>0$ there exists a $\delta>0$ such that any matching-like $f:V^n \to [0,1]$ with $\E[f] \ge \eps$
satisfies $\ip{f,Af} > \delta$. Moreover, we can take $\delta = 1/\tower(O( \log(1/\eps)))$.
\end{theorem}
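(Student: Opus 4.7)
The plan is to follow the iterative partition-refinement strategy outlined in Section~\ref{sec:mainthm}, with two ingredients: a potential function on coordinate-based partitions, and Theorem~\ref{thm:bipartitedfr} as the structural engine. I define $H(f,I):=\E_{x\in V^I}[\phi(\E_y f(x,y))]$ where $\phi(t)=t\log t$, extended by continuity to $[0,1]$. This $H$ is bounded ($-1/e\le H\le 0$), non-decreasing under refinement $I\subseteq I'$ by convexity of $\phi$, and at the trivial partition equals $\phi(\E f)\ge \phi(\eps)$. Starting from $I_0=\emptyset$, I refine the coordinate set $I_t$ as long as progress on $H$ can be made; since $H$ is bounded, the process must eventually halt, and at the stopping point the only remaining possibility will be $\ip{f,Af}>\delta$.

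The one-step refinement lemma is the technical heart. Suppose the current $I=I_t$ is small and $\ip{f,Af}\le\delta$. Expanding $\ip{f,Af}$ as a sum over pairs of parts $(X,Y)$ of the partition, an averaging argument yields a pair with conditional averages $\alpha_X,\alpha_Y=\Omega(\eps)$ but with $\ip{f|_X,Af|_Y}/(\mu(X)\mu(Y))$ very small. I then apply Theorem~\ref{thm:bipartitedfr} with parameter $\eps'=\Theta(\alpha_X\alpha_Y)$ to the normalized restrictions $f|_X$ and $f|_Y$; it returns a coordinate set $J$ of size at most $j_1(\eps')$ and template subsets $T_1,T_2\subseteq V^J$ outside which nearly all of $f$'s mass on $X$ (resp.\ $Y$) lies, and with $\ip{\indic_{T_1},A\indic_{T_2}}\le\eps'$. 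Refining the partition by $J$, the conditional averages $\E[f\mid x_{I\cup J}]$ on $X$ concentrate on the sub-parts indexed by $T_1$, producing a Jensen gap for $\phi$ that gives $H(f,I\cup J)-H(f,I)\ge \gamma(\alpha_X)$ for some explicit $\gamma$.

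The matching-like hypothesis is what converts a mass-dependent gain into a constant-per-step gain, producing the tower-of-log (rather than tower-of-$1/\eps$) bound. Without it, $\alpha_X$ and hence $\gamma$ could be as small as $\eps$, forcing $\mathrm{poly}(1/\eps)$ iterations and a corresponding tower. With matching-likeness, $f$ cannot place more than half of its mass on any independent set, so at every stage a constant fraction of $\E f$ remains ``unexplained'' by the current partition; working with this residual mass allows Fox-style halving of the unexplained mass at each step, hence $O(\log(1/\eps))$ iterations in total. Because each iteration invokes Theorem~\ref{thm:bipartitedfr} with a parameter $\eps_t$ that shrinks only polynomially between steps (since $\delta_1(\eps)=\eps^c$ and $j_1(\eps)=\eps^{-c}$), compounding the resulting parameter losses over $O(\log(1/\eps))$ iterations yields $\delta=1/\tower(O(\log(1/\eps)))$.

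The main obstacle I foresee is preserving the matching-like property through the iteration: it is stated globally on $V^n$, whereas the one-step argument operates inside ever-smaller parts of the current partition. I would handle this by maintaining a running ``explained region'' $E_t\subseteq V^n$—the union of the pre-images (under the current projection to $V^{I_t}$) of the template sets $T_s$ produced at earlier steps—and running the next refinement on the residual $f\cdot\indic_{V^n\setminus E_t}$. Since each $T_s$ is nearly independent, the matching-like hypothesis on $f$ should imply $\E[f\cdot\indic_{E_t}]\le \E f/2$, so the residual always carries at least half of $\E f$ and supplies the multiplicative per-step progress that Fox's technique demands. Making this inheritance rigorous, and checking that Theorem~\ref{thm:bipartitedfr} can still be applied to the residual with the parameters claimed, is the main technical point.
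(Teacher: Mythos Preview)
Your overall architecture—the entropy potential $H(f,I)=\E_{x\in V^I}[\phi(\E f(x,\cdot))]$, iterative coordinate refinement, and Theorem~\ref{thm:bipartitedfr} as the structural engine—matches the paper's. But the two central technical points are handled differently, and your versions have genuine gaps.

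\textbf{The one-step refinement.} You propose to locate a \emph{single} pair of atoms $(X,Y)$ with large conditional averages and few edges between them, apply Theorem~\ref{thm:bipartitedfr} once, and extract a gain $\gamma(\alpha_X)$. The paper instead iterates over \emph{all} edges $(x_1,x_2)$ of $V^I$ (including loops), applying Corollary~\ref{cor:shattering} to each pair $f(x_1,\cdot),f(x_2,\cdot)$ and placing one endpoint into a set $S\subseteq V^I$ (Lemma~\ref{eq:shatterer}). The reason for processing all edges is that the complement $\overline{S}$ is then an \emph{independent set} in $V^I$, hence its cylinder lift is independent in $V^n$, and this is exactly where the matching-like hypothesis enters: it forces $\E_x[\indic_S(x)\,\E f(x,\cdot)]\ge \alpha/2$. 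Your single-pair refinement does not produce an independent complement, so there is no clean hook for matching-likeness; moreover the entropy gain from refining one pair is weighted by $\mu(X)+\mu(Y)$, which may be arbitrarily small.

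\textbf{The use of matching-likeness.} Your proposed substitute—accumulating an ``explained region'' $E_t$ from the templates $T_s$ and asserting $\E[f\cdot\indic_{E_t}]\le\E[f]/2$—does not work as stated. The sets $T_s$ returned by Theorem~\ref{thm:bipartitedfr} are only \emph{sparse} ($\ip{\indic_{T_1},A\indic_{T_2}}\le\eps'$), not independent, and the matching-like bound says nothing about sparse sets; even if each $T_s$ were independent, their union need not be. In the paper there is no running explained region: matching-likeness is applied fresh at every step to the independent set $\overline{S}\subseteq V^I$. The resulting per-step gain is simply additive, $H(f,I\cup\bigcup_x J_x)\ge H(f,I)+\alpha/128$ (Claim~\ref{clm:potentialimproves}), and since $H(f,\emptyset)=\alpha\log\alpha$ and $H\le 0$, this already yields at most $128\log(1/\alpha)\le 128\log(1/\eps)$ iterations—no Fox-style halving of residual mass is needed or used. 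The tower arises purely from the growth $|I_{t+1}|\le |I_t|+r|V|^{|I_t|}$ and the normalization factor $w_{\min}^{|I|}$ incurred when passing from $V^n$ to $V^{n-|I|}$.
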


This now easily implies our main theorem.

\begin{proof}[Proof of Theorem \ref{thm:main}]
Given $g$ we invoke Claim \ref{clm:maxmatching} to produce an appropriate matching-like $f$.
Note that since the set $U:= \{x: f(x) < g(x)\}$ is independent we have that 
$$
\E[f] \ge \E[ \indic_{\overline{U}}\cdot f ]   = \E[ \indic_{\overline{U}}\cdot g ] > \eps.
$$
So, by Theorem \ref{thm:mainbalanced} we have $\ip{f,Af} > \delta$. Since $g \ge f$ pointwise it follows that 
$\ip{g,Ag} > \delta$ as required.
\end{proof}

\section{The potential argument}\label{sec:potential}

Given a function $f: V^n \to [0,1]$, and a set of coordinates $I \subseteq [n]$ we wish to study how well $f(x)$ is predicted by the coordinates of $x$ indexed by $I$. If $f$ depends only on the coordinates in $I$, then we think of $f$ as being perfectly correlated with $I$, in which case $f$ is constant on every part of the partition of $V^n$ induced by $I$. Otherwise, we can improve this correlation by refining the partition according to additional coordinates. In Definition~\ref{def:shattering} we set a (partially arbitrary) benchmark for how much this refinement improves the correlation, and define any refinement that succeeds as ``substantially improving the correlation''. If for every $x$ in $V^I$ we partition $\{x\} \times V^ {[n] \setminus I}$ according to $r$ additional coordinates we call this an ``$r$-refinement''. Our main goal in this section is the proof of 
Lemma~\ref{lem:shatteringsuffices} that states, roughly, that for any $f$ and $r$ there exists a (not-too-large) set of coordinates $I$ whose correlation with $f$ cannot be substantially improved by $r$-refinement.
Definition~\ref{def:shattering} and the statement of Lemma~\ref{lem:shatteringsuffices} are the only parts of this section used in the rest of the paper. 

\begin{definition}\label{def:shattering}
Consider a function $f:V^n \to [0,1]$ and let $\alpha$ denote $\E[f]$. For $r \ge 1$ and for a set of coordinates  $I \subseteq [n]$, we say that the correlation of $I$ with $f$ can be \emph{substantially improved by $r$-refinement} if there exists a subset $S \subseteq V^I$ for which the following holds:
\begin{enumerate}
\item
$\E_{x \in V^I}[\indic_S(x) \E [f(x,\cdot)]] \ge \alpha/2$, and
\item
For each $x \in S$ there exists $J_x \subseteq [n]\setminus I$ of cardinality at most $r$ and $T_x \subseteq V^{J_x}$ satisfying 
\begin{enumerate}
\item $\Pr[y \in T_x] \le 3/4$, and
\item $\E_{y \notin T_x} [\E [f(x,y,\cdot)]] \le \alpha/8$.
\end{enumerate}
\end{enumerate}
\end{definition}
Just to ensure the notation in ($b$) is clear:  $\E_{y \notin T_x}[h(y)] := \frac{\sum_{y \not \in T_x} \mu_p(y)h(y)}{\Pr[y \not \in T_x]}$.

As mentioned, a set of coordinates $I \subset [n]$ is perfectly correlated with $f$ if for every $x \in V^I$ it holds that $f(x,\cdot)$ is constant. Substantially improving the correlation of $I$ with $f$ by refinement, means that for a portion of inputs $x \in  V^I$, which are responsible for at least half of the expectation of $f$, it holds that by an appropriate choice of $J_x$, (at most $r$ additional coordinates from $[n] \setminus I$), partitioning $\{x\} \times V^{[n] \setminus I}$ according to these additional coordinates yields a partition where in many parts the conditional expectation of $f$ drops substantially. In a sense this implies that $f$ is closer to being constant on the parts of the refined partition of $V^{[n]}$. 

\begin{lemma}\label{lem:shatteringsuffices}
Consider a function $f:V^n \to [0,1]$ and let $\alpha$ denote $\E[f]$. Let $r$ be a positive integer,
and define the function $\Gamma(\ell):=\ell+r|V|^\ell$.
Then there exists a set $J \subseteq [n]$ of cardinality at most $k=k(\alpha,r):=\Gamma^{\circ 128\log(1/\alpha)}(0)$ whose correlation with $f$ cannot be substantially improved by $r$-refinement, where $\Gamma^{\circ t}$ denotes the composition of $\Gamma$ with itself $t$ times.
\end{lemma}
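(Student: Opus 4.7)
The plan is a potential-increment argument in the spirit of Fox's proof. Define the potential of $I \subseteq [n]$ by $\Phi(I) := \E_{x \in V^I}[W_x \log W_x]$, where $W_x := \E[f(x,\cdot)]$. Since $t \mapsto t\log t$ is convex, $\Phi$ is nondecreasing under refinement of $I$. Moreover $\Phi(\emptyset) = \alpha\log\alpha$ and $\Phi(I) \le 0$ for every $I$, so the total increase of $\Phi$ across the whole process is bounded by $-\alpha\log\alpha = \alpha\log(1/\alpha)$.

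Starting from $J_0 = \emptyset$, the iteration is: while the correlation of $J_i$ with $f$ can be substantially improved, fix witnesses $S \subseteq V^{J_i}$ and $\{(J_x,T_x) : x \in S\}$ from Definition~\ref{def:shattering} and set $J_{i+1} := J_i \cup \bigcup_{x \in S} J_x$. Since $|S| \le |V|^{|J_i|}$ and each $|J_x| \le r$, we have $|J_{i+1}| \le |J_i| + r|V|^{|J_i|} = \Gamma(|J_i|)$, so $|J_t| \le \Gamma^{\circ t}(0)$ after $t$ iterations.

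The crux is to show that each iteration raises $\Phi$ by at least $\alpha/128$. Decompose $\Phi(J_{i+1}) - \Phi(J_i) = \E_x[\Phi_x(J_{i+1}) - \Phi_x(J_i)]$, where $\Phi_x$ is the analogous quantity computed within the part indexed by $x$. Each summand is nonnegative by convexity, and for $x \in S$ the summand is at least what we would obtain by refining with $J_x$ alone (further refinement can only help). Focus attention on $S'' := \{x \in S : W_x \ge \alpha/4\}$; the $x \in S \setminus S''$ contribute at most $\alpha/4$ of total mass, so condition (1) of Definition~\ref{def:shattering} forces $\sum_{x \in S''} \mu(x) W_x \ge \alpha/4$. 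For $x \in S''$, set $p(y) := W_{x,y}/W_x$ (a density with respect to $\mu$ on $V^{J_x}$) and $A := V^{J_x} \setminus T_x$. Then the within-$x$ gain obtained by refining with $J_x$ equals $W_x \cdot \E_y[p(y) \log p(y)]$, which is the KL-divergence of the measure $p\mu$ from $\mu$. By (2a), $\mu(A) = q \ge 1/4$, and by (2b) combined with $W_x \ge \alpha/4$ we get $\E_y[p\,\indic_A] = \E_y[W_{x,y}\indic_A]/W_x \le q\alpha/(8W_x) \le q/2$. Applying the data-processing inequality to the coarse-graining $(A,A^c)$ gives $\E_y[p\log p] \ge D_{\mathrm{KL}}((q/2,1-q/2)\,\|\,(q,1-q))$, and a direct computation over $q \in [1/4,1]$ shows this is bounded below by an absolute constant (the minimum, near $q=1/4$, is above $1/32$). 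Hence the within-$x$ gain is at least $W_x/32$, and summing over $S''$ yields a total potential gain of at least $(1/32)\cdot(\alpha/4) = \alpha/128$.

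Combining these pieces, the iteration must terminate within $128\log(1/\alpha)$ steps (otherwise the total potential gain would exceed $\alpha\log(1/\alpha)$), and the final $J$ satisfies $|J| \le \Gamma^{\circ 128\log(1/\alpha)}(0) = k(\alpha,r)$ while having the property, by construction, that its correlation with $f$ cannot be substantially improved by $r$-refinement. The main obstacle I foresee is calibrating the two thresholds in the argument — the cutoff $\alpha/4$ defining $S''$ and the induced density bound $q/2$ on $A$ — so that both the mass captured by $S''$ and the KL lower bound are simultaneously large enough to yield the absolute constant $1/128$ per step; too aggressive a threshold in either direction destroys the matching of constants with the stated bound $128\log(1/\alpha)$.
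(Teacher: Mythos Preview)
Your proposal is correct and follows essentially the same approach as the paper: the identical potential $H(f,I)=\E_{x}[\varphi(\E[f(x,\cdot)])]$, the same iteration $J_{i+1}=J_i\cup\bigcup_{x\in S}J_x$ with size bound $|J_{i+1}|\le\Gamma(|J_i|)$, and the same restriction to $S''=\{x\in S:W_x\ge\alpha/4\}$ to secure mass $\ge\alpha/4$. The one cosmetic difference is that where the paper proves the per-part gain $\ge W_x/32$ by a direct convexity inequality for $\varphi$ (its Claim~\ref{clm:varphi}), you obtain the same bound by rewriting the gain as $W_x\cdot D_{\mathrm{KL}}(p\mu\,\|\,\mu)$ and applying the data-processing inequality to the partition $(T_x^c,T_x)$; the resulting numerical inequality $(1/8)\log(1/2)+(7/8)\log(7/6)>1/32$ is literally the same in both arguments.
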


The proof uses a potential argument. Define the function $\varphi:\R^{\ge 0} \to \R$ by $\varphi(x)=x\log x$ (and set 
$\varphi(0)=0$.)
This is a convex function and is non-positive on $[0,1]$. For $f:V^n \to [0,1]$ and $I \subseteq [n]$ define the \emph{entropy}
of $f$ with respect to $I$ as
\[
H(f,I) := \E_{x \in V^I} [\varphi(\E [f(x,\cdot)])].
\]
Since $\varphi$ is convex, we get from Jensen's inequality that $H(f,I)$ is monotone in $I$, i.e., 
$H(f,I) \le H(f,J)$ whenever $I \subseteq J$.
Moreover, $H(f,I) \le 0$ for all $I$.

\begin{claim}\label{clm:potentialimproves}
Let $f:V^n \to [0,1]$ be a function and $\alpha=\E[f]$. Let $I \subseteq [n]$. If the correlation of $I$ with $f$ is substantially improved by $r$-refinement for some $r \geq 1$ then
\[
H(f,J) \ge H(f,I) +\alpha/128,
\]
where $J := I \cup \bigcup_{x \in S} J_x$, and $S,J_x$ are as in Definition~\ref{def:shattering}.
\end{claim}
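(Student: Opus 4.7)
The plan is to write the entropy difference as an average over $x \in V^I$ and extract a pointwise lower bound for $x \in S$. Since $I \subseteq J$ we can factor $V^J = V^I \times V^{J \setminus I}$, and writing $\beta_x := \E[f(x,\cdot)]$,
\[
H(f,J) - H(f,I) = \E_{x \in V^I}\bigl[\E_{w \in V^{J\setminus I}}[\varphi(\E[f(x,w,\cdot)])] - \varphi(\beta_x)\bigr].
\]
Each summand is nonnegative by Jensen applied to $\varphi$; moreover, refinement monotonicity (using $J\setminus I \supseteq J_x$ for every $x \in S$) shows that for such $x$ the $x$-th summand is at least $\E_{y \in V^{J_x}}[\varphi(\E[f(x,y,\cdot)])] - \varphi(\beta_x)$. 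I would next coarsen this refinement back to the two-part partition $(T_x, \overline{T_x})$: writing $p := \Pr[y \in T_x]$, $q := 1-p$, $a := \E_{y \in T_x}[\E[f(x,y,\cdot)]]$, $b := \E_{y \notin T_x}[\E[f(x,y,\cdot)]]$, a further Jensen step bounds the $x$-th summand below by
\[
D_x := p\,\varphi(a) + q\,\varphi(b) - \varphi(\beta_x), \qquad \beta_x = pa + qb.
\]

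I would then restrict attention to $S' := \{x \in S : \beta_x \geq \alpha/4\}$. Points in $S \setminus S'$ contribute at most $\alpha/4$ to $\E_x[\indic_S(x)\beta_x]$, which by Condition~1 of Definition~\ref{def:shattering} is at least $\alpha/2$, so $\sum_{x \in S'}\mu(x)\beta_x \geq \alpha/4$. If I can establish the pointwise bound $D_x \geq \beta_x/32$ for every $x \in S'$, then summing gives
\[
H(f,J) - H(f,I) \geq \frac{1}{32}\sum_{x \in S'}\mu(x)\beta_x \geq \frac{\alpha}{128},
\]
as required.

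The heart of the argument -- and the step that I expect to require the most care -- is the inequality $D_x \geq \beta_x/32$ for $x \in S'$. A direct rearrangement shows
\[
D_x = \beta_x \cdot D_{\mathrm{KL}}\bigl((pa/\beta_x,\,qb/\beta_x)\,\big\|\,(p,q)\bigr).
\]
Setting $u := pa/\beta_x$, so that $1-u = qb/\beta_x$, the conditions $b \leq \alpha/8$ and $\beta_x \geq \alpha/4$ yield $b \leq \beta_x/2$, hence $1-u \leq q/2$ and $u - p \geq q/2$. Since $p \leq 3/4$ gives $q \geq 1/4$, we obtain $u - p \geq 1/8$, and Pinsker's inequality closes the gap: $D_{\mathrm{KL}}\bigl((u,1-u)\|(p,q)\bigr) \geq 2(u-p)^2 \geq 1/32$. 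Multiplying through by $\beta_x$ finishes the proof. The delicate point is that both conditions in Definition~\ref{def:shattering} -- $p \leq 3/4$ \emph{and} $b \leq \alpha/8$ -- must be used simultaneously, along with the selection $\beta_x \geq \alpha/4$, to produce a universal constant lower bound on the KL divergence; dropping either condition would make the two distributions collapse to one another and only yield a bound degenerating with $\alpha$ or $\beta_x$.
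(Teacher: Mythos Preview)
Your proof is correct and structurally identical to the paper's: the same restriction to $S' = \{x \in S : \beta_x \ge \alpha/4\}$, the same reduction to the pointwise inequality $D_x \ge \beta_x/32$, and the same summation to reach $\alpha/128$. The only difference lies in how that pointwise inequality is established---the paper proves it by a direct secant-line convexity argument on $\varphi$ (their Claim~\ref{clm:varphi}), whereas you recognize $D_x/\beta_x$ as a binary KL divergence and invoke Pinsker; both routes deliver the same constant $1/32$.
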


We first observe that this claim implies Lemma~\ref{lem:shatteringsuffices}.

\begin{proof}[Proof of Lemma~\ref{lem:shatteringsuffices}]
Notice that by definition $H(f,\emptyset)=\alpha \log \alpha$. Hence, starting with $I_0 =\emptyset$,
we repeatedly apply Claim~\ref{clm:potentialimproves}, and create an $r$-refinement, each time obtaining a new set $I_{t+1}$ with
 $H(f,I_{t+1}) \ge H(f,I_t) + \alpha/128$, and $ |I_{t+1}| \leq \Gamma(|I_t|)$. We continue as long as the correlation of 
 $I_{t+1}$ with $f$ can be substantially improved by $r$-refinement.
Since $H(f,I) \le 0$ for all $I$, this process must terminate after at most $128 \log(1/\alpha)$ steps,  implying the lemma. 
 \end{proof}

\begin{proof}[Proof of Claim~\ref{clm:potentialimproves}] 
Let $S,J_x,T_x$ be as in Definition~\ref{def:shattering}. 
Define $S' \subseteq S$ as the set of $x \in S$ satisfying $\E[f(x,\cdot)] \ge \alpha/4$.
By Item 1 of Definition~\ref{def:shattering},
\begin{equation}\label{eq:potentialimproves1}
\E_{x \in V^I}[\indic_{S'}(x) \E[f(x,\cdot)]] \ge \E_{x \in V^I}[\indic_{S}(x) \E[f(x,\cdot)]] -\alpha/4 \ge\alpha/4.
\end{equation}
Now,
\begin{align}
H(f,J) &= \E_{x \in V^I} \Big[H\Big(f(x,\cdot), \bigcup_{s \in S} J_s\Big)\Big] \nonumber \\
&=  \E_{x \in V^I} \Big[\indic_{S'}(x) H\Big(f(x,\cdot), \bigcup_{s \in S} J_s\Big)\Big] +
       \E_{x \in V^I} \Big[\indic_{\overline{S'}}(x) H\Big(f(x,\cdot), \bigcup_{s \in S} J_s\Big)\Big] .
 \label{eq:twoterms}
\end{align}
By monotonicity, the second term in~\eqref{eq:twoterms} is at least
\begin{align}\label{eq:secondterm}
\E_{x \in V^I} [\indic_{\overline{S'}}(x) \varphi(\E[f(x,\cdot)])].
\end{align}
For analyzing the first term in~\eqref{eq:twoterms}, fix any $x \in S'$. Then using monotonicity,
\begin{align}
H\Big(f(x,\cdot), \bigcup_{s \in S} J_s\Big) &\ge
 H(f(x,\cdot), J_x) \nonumber \\
&= \E_{y \in V^{J_x}} [\varphi(\E[f(x,y,\cdot)])] \nonumber \\
&= \Pr[y \notin T_x] \E_{y \notin T_x} [\varphi(\E[f(x,y,\cdot)]) ] + 
   \Pr[y \in T_x] \E_{y  \in T_x} [\varphi(\E[f(x,y,\cdot)]) ] \nonumber \\
&\ge \Pr[y \notin T_x] \varphi( \E_{y \notin T_x} [\E[f(x,y,\cdot)] ]) +
 \Pr[y \in T_x] \varphi( \E_{y  \in T_x} [\E[f(x,y,\cdot)] ]) \nonumber \\
	&= \lambda \varphi(u) + 
   (1-\lambda) \varphi(v)\nonumber \\
	& \ge  \varphi(w) + w/32,\label{eq:firstterm}
\end{align}
where we define $\lambda := \Pr[y \notin T_x]$, $u := \E_{y \notin T_x}[\E[f(x,y,\cdot)] ]$,
 $v :=\E_{y  \in T_x}[\E[f(x,y,\cdot)] ]$, and $w:=\lambda u + (1-\lambda)v = \E[f(x,\cdot)]$.
The last inequality follows from Claim~\ref{clm:varphi} below,
noting that by Definition~\ref{def:shattering}, $\lambda \ge 1/4$ (Item 2.a), $u \le \alpha/8$ (Item 2.b), and 
$w \ge \alpha/4$ by our choice of $S'$, and therefore $u \le w/2$ as required.
Plugging this into~\eqref{eq:twoterms}, we obtain that
\begin{align*}
H(f,J) &\ge
\E_x[\indic_{S'}(x) \varphi(\E[f(x,\cdot)])] + \E_x[\indic_{\overline{S'}}(x) \varphi(\E[f(x,\cdot)])]
 + \frac{1}{32} \E_x[\indic_{S'}(x)\E[f(x,\cdot)]] \\
&= H(f,I) + \frac{1}{32} \E_x[\indic_{S'}(x)\E[f(x,\cdot)]] \\
&\ge H(f,I) + \frac{\alpha}{128},
\end{align*}
where the last inequality uses~\eqref{eq:potentialimproves1}.
\end{proof}

\begin{claim}
\label{clm:varphi}
For any $u,v>0$ and $\lambda \in [1/4,1]$ satisfying that $u \le w/2$ where $w := \lambda u + (1-\lambda) v$,
\[
\lambda \varphi(u) + (1-\lambda) \varphi(v) \ge \varphi(w) + w / 32 \; .
\] 
\end{claim}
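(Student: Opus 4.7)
The plan is to reduce the claim to a bound on a Kullback--Leibler divergence and then apply Pinsker's inequality. Using $\varphi(x) = x\log x$ together with $w = \lambda u + (1-\lambda) v$, one expands directly (grouping the $\log w$ terms) to obtain the identity
\[
\lambda \varphi(u) + (1-\lambda) \varphi(v) - \varphi(w) \;=\; \lambda u \log(u/w) + (1-\lambda) v \log(v/w).
\]
Setting $p := u/w$ and $q := v/w$, the constraint $\lambda p + (1-\lambda) q = 1$ lets me read $P := (\lambda p,\, (1-\lambda) q)$ and $Q := (\lambda,\, 1-\lambda)$ as probability distributions on two atoms, and the right-hand side above equals $w \cdot D(P \| Q)$, where $D(P\|Q)$ denotes KL divergence in natural log.

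Next I would bound the total variation distance $\|P-Q\|_{TV}$ from below. The hypothesis $u \le w/2$ is precisely $p \le 1/2$, and combined with $\lambda p + (1-\lambda) q = 1$ this forces $q \ge 1$; a short calculation then shows that the signed discrepancies $\lambda p - \lambda$ and $(1-\lambda)q - (1-\lambda)$ have the same magnitude $\lambda(1-p) = (1-\lambda)(q-1)$, so $\|P-Q\|_{TV} = \lambda(1-p)$. Using $\lambda \ge 1/4$ and $1-p \ge 1/2$ yields $\|P-Q\|_{TV} \ge 1/8$.

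Finally, Pinsker's inequality $D(P\|Q) \ge 2\|P-Q\|_{TV}^2$ (valid with natural log) gives $D(P\|Q) \ge 1/32$, and multiplying through by $w$ delivers $\lambda\varphi(u) + (1-\lambda)\varphi(v) \ge \varphi(w) + w/32$, as required. The hypothesis $u \le w/2$ enters at exactly one point---keeping $p$ bounded away from $1$ so that $\|P-Q\|_{TV}$ is $\Omega(1)$---and this is really the heart of the estimate; everything else is the standard Jensen-gap-as-KL identity plus Pinsker. There is no serious obstacle; if one preferred to avoid invoking Pinsker, the same constant $1/32$ can be recovered by a direct second-order Taylor expansion of $p \mapsto \lambda p \log p + (1-\lambda) q \log q$ (with $q = (1-\lambda p)/(1-\lambda)$) around $p = 1$, combined with a monotonicity check on $[0, 1/2]$.
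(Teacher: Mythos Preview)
Your proof is correct, but it takes a different route from the paper's. Both arguments begin with the same identity
\[
\lambda \varphi(u) + (1-\lambda)\varphi(v) - \varphi(w) \;=\; w\bigl(\lambda \varphi(u/w) + (1-\lambda)\varphi(v/w)\bigr),
\]
so the task reduces to showing $\lambda\varphi(p) + (1-\lambda)\varphi(q) \ge 1/32$ with $p = u/w \le 1/2$, $q = v/w$, and $\lambda p + (1-\lambda) q = 1$. From here the paper argues purely by convexity: since $p \le 1/2$ and $q \ge 7/6$, the chord from $(p,\varphi(p))$ to $(q,\varphi(q))$ evaluated at $x=1$ lies above the chord from $(1/2,\varphi(1/2))$ to $(7/6,\varphi(7/6))$ evaluated at $x=1$, and the latter is checked numerically to exceed $1/32$. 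You instead recognise $\lambda\varphi(p) + (1-\lambda)\varphi(q)$ as the KL divergence $D(P\|Q)$ between the two-point distributions $P = (\lambda p,\,(1-\lambda)q)$ and $Q = (\lambda,\,1-\lambda)$, compute $\|P-Q\|_{TV} = \lambda(1-p) \ge 1/8$, and finish with Pinsker. Your argument is more conceptual and avoids the numerical check, while the paper's is more elementary, relying only on convexity of $\varphi$ and a single arithmetic evaluation; both deliver the required constant.
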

\begin{proof}
From the definition of $\varphi$ we have
\begin{equation}
\label{eq:varphione}
	 \lambda \varphi(u) + 
   (1-\lambda) \varphi(v) =
	 \varphi(w) + w \big(\lambda \varphi(u/w) + (1-\lambda) \varphi(v/w)\big) \; .
\end{equation}
Notice that $v/w = (1- \lambda u/w) / (1-\lambda) \ge 7/6$. Since $\varphi$ is convex,
the line segment connecting
$(u/w,\varphi(u/w))$ with $(v/w,\varphi(v/w))$ 
lies above the line segment connecting
$(1/2,\varphi(1/2))$ with $(7/6,\varphi(7/6))$ (see Figure~\ref{fig:varphi}),
and therefore, their points of intersection with the vertical line $x=1$ satisfy  
\begin{equation}
\label{eq:varphitwo}
\lambda \varphi(u/w) + (1-\lambda) \varphi(v/w) \ge 
\frac{1}{4} \varphi(1/2) + \frac{3}{4} \varphi(7/6) > 1/32 \;.
\end{equation}
Combining Eqs.~\eqref{eq:varphione} and~\eqref{eq:varphitwo} yields the result.
\end{proof}
\begin{figure}[ht]
\begin{center}
\includegraphics[width= 0.4\textwidth]{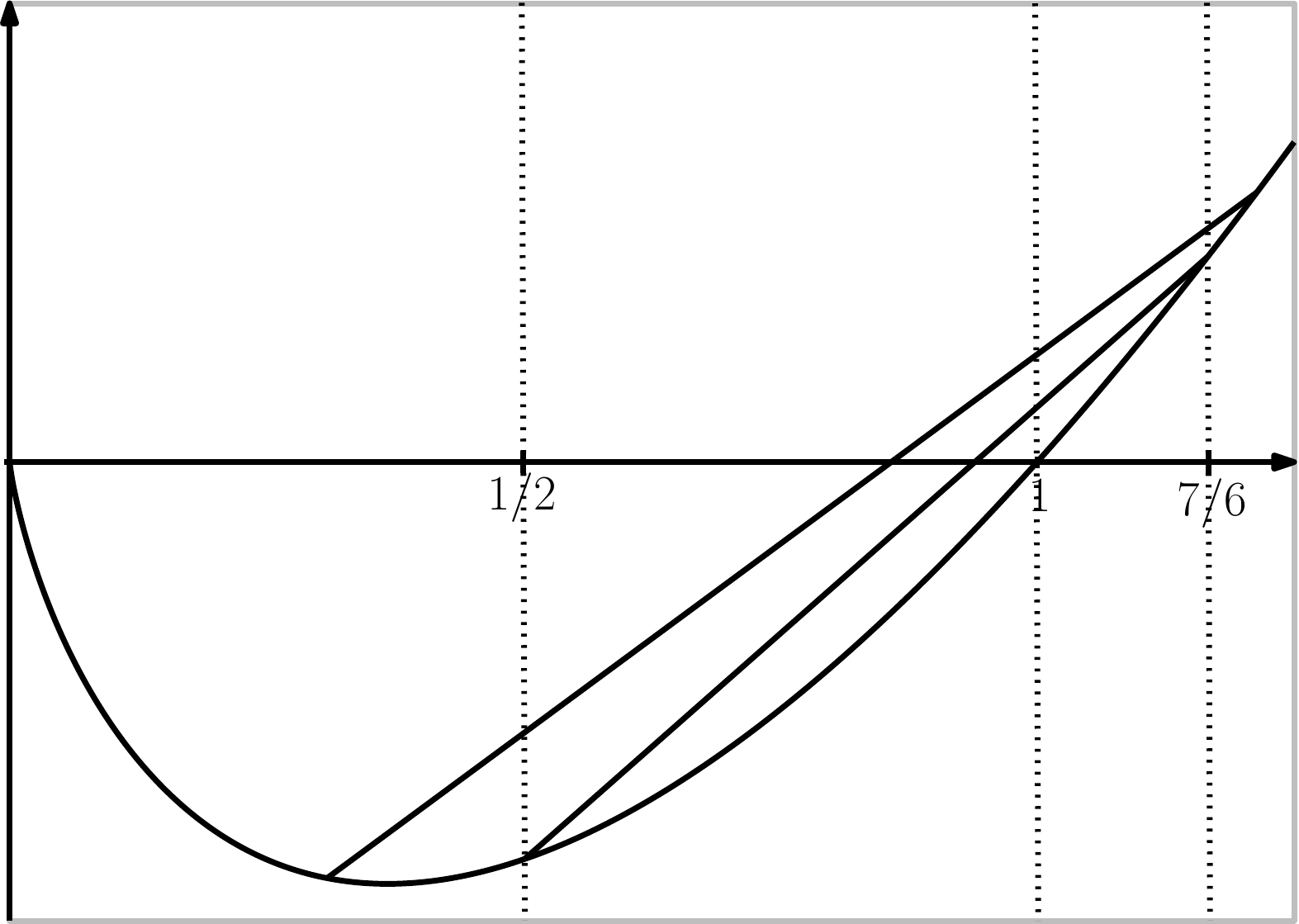} 
\caption{\label{fig:varphi} 
$\varphi(x)$ for $x \in [0,5/4]$.}
\end{center}
\end{figure}

\section{Improving correlation through refinements}\label{sec:sparse}

In this section we prove Theorem~\ref{thm:mainbalanced}, thereby completing the proof of our main theorem.  
It will be convenient to use the following corollary of Theorem~\ref{thm:bipartitedfr}, showing
that if two functions $f_1,f_2$ span very few edges, then one of them must be concentrated on a junta of measure $3/4$.
\begin{cor}\label{cor:shattering}
For all $0 < \eps \le 1/2$ there exist $\delta>0, r \ge 1$, such that the following holds.
For all $n \ge 1$ and $f_1,f_2:V^n \to [0,1]$ such that $\ip{f_1,Af_2} \le \delta$, there exist $i \in \{1,2\}$, $J \subseteq [n]$, $|J| \le r$, and $T \subseteq V^J$ such that 
\begin{equation}\label{eq:corshattering}
\E_{x \in V^J}[\indic_{\overline{T}}(x) \E [f_i(x,\cdot)]] \le \eps,
\end{equation}
and 
\begin{equation}\label{eq:corshattering2}
\Pr[x \in T] \le 3/4.
\end{equation}
Moreover, one can take $\delta=\delta_1(\eps):= \eps^c$ and $r=r_1(\eps):=\eps^{-c}$
where $c>0$ is a constant depending only on $A$.
\end{cor}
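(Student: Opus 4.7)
The plan is to invoke Theorem~\ref{thm:bipartitedfr} essentially verbatim and then use stationarity to rule out the degenerate case where both returned sets have large measure. Concretely, given $\eps \in (0,1/2]$, I would apply Theorem~\ref{thm:bipartitedfr} to the same $f_1,f_2$ with error parameter $\eps$ and threshold $\delta=\delta_1(\eps)$. This produces $J\subseteq [n]$ of size at most $j_1(\eps)$ together with $T_1,T_2 \subseteq V^J$ satisfying the two marginal bounds in~\eqref{eq:outsidetlowexpect} (each of which is exactly the form of~\eqref{eq:corshattering} for the corresponding index $i$), and the joint bound $\ip{\indic_{T_1},A\indic_{T_2}}\le \eps$.

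The only new content relative to Theorem~\ref{thm:bipartitedfr} is the measure bound~\eqref{eq:corshattering2}, and the key claim is that it must hold for at least one of $T_1,T_2$. I would prove this by contradiction: assume $\mu(T_1),\mu(T_2)>3/4$. Since $\mu$ is the stationary distribution of $A$, the pair $(x,y)$ with $x\sim\mu$ and $y$ chosen according to the transition matrix has both marginals equal to $\mu$, so the union bound yields
\[
\ip{\indic_{T_1},A\indic_{T_2}} \;=\; \Pr[x\in T_1,\ y\in T_2] \;\ge\; \mu(T_1)+\mu(T_2)-1 \;>\; \tfrac{1}{2} \;\ge\; \eps,
\]
which contradicts the conclusion of Theorem~\ref{thm:bipartitedfr}. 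Hence we may pick $i\in\{1,2\}$ with $\mu(T_i)\le 3/4$ and set $T:=T_i$; the quantitative parameters $\delta=\delta_1(\eps)=\eps^c$ and $r=j_1(\eps)=\eps^{-c}$ are simply inherited, so the ``moreover'' clause is immediate (up to renaming $j_1$ as $r_1$).

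There is essentially no obstacle in this argument; it is a short packaging step that converts the bilinear information in Theorem~\ref{thm:bipartitedfr} (two junta indicators plus a small joint edge-weight) into the single-sided ``concentrated on a non-trivial junta'' statement needed in Section~\ref{sec:sparse}. The only observation doing any real work is that stationarity forces both marginals of an $A$-edge to be $\mu$, so that $\mu(T_1)+\mu(T_2)>3/2$ forces a collision weight greater than $1/2$, and the hypothesis $\eps\le 1/2$ is exactly what is needed to close the contradiction.
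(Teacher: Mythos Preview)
Your proposal is correct and follows exactly the paper's approach: apply Theorem~\ref{thm:bipartitedfr} and observe that $\ip{\indic_{T_1},A\indic_{T_2}}\le \eps \le 1/2$ forces $\mu(T_i)\le 3/4$ for some $i$. The paper states this last implication without justification, whereas you spell out the stationarity-plus-union-bound argument, but the content is the same.
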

\begin{proof}
Apply Theorem~\ref{thm:bipartitedfr}, yielding  $|T_1|, |T_2| \leq r$ where $r=j$, and observe that $\ip{\indic_{T_1}, A\indic_{T_2}} \le \eps \le 1/2$
implies that either $\Pr[x \in T_1] \le 3/4$ or $\Pr[x \in T_2] \le 3/4$.
\end{proof}

Let $w_{\min}$ be the minimal weight of an edge in the transition graph of $A$, i.e., the minimal positive value of $\ip{\indic_u,A \indic_v}$, where $u$ and $v$ are elements of $V$. Note that the weight of the minimal edge for $A^{\otimes m}$ is $w_{\min}^m$.
\begin{lemma}\label{eq:shatterer}
For all $\eps>0$ there exists $r \ge 1$, such that for any  $k \ge 1$ there exists $\delta>0$ such that for any matching-like $f:V^n \to [0,1]$ with
$\ip{f,Af} \le \delta$ and $\E[f] = \eps$, and for any $I \subseteq [n]$ of cardinality at most $k$, the correlation of $I$ with $f$  can be substantially improved by $r$-refinement. Moreover, we can take 
$\delta=\delta_2(\eps,k):= w_{\min}^k \delta_1(\eps/32) = w_{\min}^k (\eps/32)^c$ and 
$r=r_2(\eps):= r_1(\eps/32) =  (\eps/32)^{-c}$.
\end{lemma}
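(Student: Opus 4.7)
My plan is to apply Corollary~\ref{cor:shattering} to pairs of ``slices'' $g_{x_I}(y):=f(x_I,y)$ indexed by edges $(x_I,y_I)$ of the product Markov chain $A^{\otimes I}$ on $V^I$, allowing the self-loop case $x_I=y_I$ whenever $A^{\otimes I}(x_I,x_I)>0$. The matching-like hypothesis is invoked only at the very end, to control the $f$-mass of the ``unresolved'' slices. Write $R:=[n]\setminus I$ and $\alpha:=\E[f]=\eps$. The tensor structure of $A^{\otimes n}$ gives, for every edge $(x_I,y_I)$ of $A^{\otimes I}$,
\[
\mu(x_I)\,A^{\otimes I}(x_I,y_I)\,\ip{g_{x_I},A^{\otimes R}g_{y_I}}\;\le\;\ip{f,Af}\;\le\;\delta,
\]
and since every edge of $A^{\otimes I}$ has weight $\mu(x_I)\,A^{\otimes I}(x_I,y_I)\ge w_{\min}^{|I|}\ge w_{\min}^k$, the choice $\delta=w_{\min}^k\,\delta_1(\eps/32)$ yields $\ip{g_{x_I},A^{\otimes R}g_{y_I}}\le\delta_1(\eps/32)$ for every such edge.

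For each such edge I would invoke Corollary~\ref{cor:shattering} on $V^R$ with parameter $\eps/32$ applied to $(g_{x_I},g_{y_I})$. This produces $J\subseteq R$ with $|J|\le r_1(\eps/32)=r$, an index $i\in\{1,2\}$, and $T\subseteq V^J$ with $\Pr[T]\le 3/4$ and $\E[\indic_{\overline T}(y)\,\E[g_{z_I}(y,\cdot)]]\le\eps/32$, where $z_I:=x_I$ if $i=1$ and $z_I:=y_I$ if $i=2$. Call $z_I$ \emph{resolved by the edge $(x_I,y_I)$}: since $\Pr[\overline T]\ge 1/4$, this gives $\E_{y\notin T}[\E[g_{z_I}(y,\cdot)]]\le(\eps/32)/(1/4)=\alpha/8$, so $(J,T)$ is a valid $(J_{z_I},T_{z_I})$ in the sense of Definition~\ref{def:shattering}. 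A self-loop edge at $x_I$ automatically resolves $x_I$. Let $U\subseteq V^I$ collect those $x_I$ that are resolved by \emph{no} incident edge of $A^{\otimes I}$.

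The crux of the argument is that $U\times V^R$ is an independent set of $V^n$. Indeed, any $A^{\otimes I}$-edge $(x_I,y_I)$ both of whose endpoints lie in $U$ -- including the self-loop case $x_I=y_I$ -- would, via the application of Corollary~\ref{cor:shattering} above, resolve at least one endpoint and contradict its membership in $U$. Hence $U$ contains no self-looped vertex of $A^{\otimes I}$ and no $A^{\otimes I}$-adjacent pair, so $U\times V^R$ contains no $A^{\otimes n}$-edge, and the matching-like hypothesis yields
\[
\sum_{x_I\in U}\mu(x_I)\,\E[g_{x_I}]\;=\;\E[\indic_{U\times V^R}\cdot f]\;\le\;\alpha/2.
\]

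Set $S:=V^I\setminus U$, so that $\E_{x_I}[\indic_S(x_I)\,\E[g_{x_I}]]\ge\alpha/2$, which is Item~1 of Definition~\ref{def:shattering}. For each $x_I\in S$ with $\E[g_{x_I}]>\alpha/8$ take $(J_{x_I},T_{x_I})$ from any edge that resolves $x_I$; for each $x_I\in S$ with $\E[g_{x_I}]\le\alpha/8$ take $J_{x_I}=\emptyset$ and $T_{x_I}=\emptyset\subseteq V^\emptyset$, so that $\Pr[T_{x_I}]=0\le 3/4$ and $\E_{y\notin T_{x_I}}[\E[g_{x_I}(y,\cdot)]]=\E[g_{x_I}]\le\alpha/8$. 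The main obstacle -- and the only step where the matching-like hypothesis plays a role in this lemma -- is the control of the unresolved set $U$; without matching-like, $U$ could in principle carry essentially all of $f$'s mass and no refinement would exist.
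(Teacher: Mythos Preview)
Your proof is correct and follows essentially the same route as the paper: apply Corollary~\ref{cor:shattering} (with parameter $\eps/32$) across the edges of $A^{\otimes I}$, use the factor $w_{\min}^{-k}$ to bound each slice inner product by $\delta_1(\eps/32)$, let $S$ be the resolved endpoints so that $\overline{S}$ is independent in $V^I$, and invoke matching-like to get Item~1. The only differences are cosmetic---the paper processes edges greedily, skipping those with an endpoint already in $S$, whereas you process all edges and collect all resolved vertices---and your final case split on $\E[g_{x_I}]\le\alpha/8$ is harmless but unnecessary, since every $x_I\in S$ is by definition resolved and already carries a valid $(J_{x_I},T_{x_I})$.
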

Notice that $r_2$ depends on $\eps$ but not on $k$. Also note that the factor $w_{\min}^k$, which, as we will see in the proof, is incurred by the normalization when moving from $V^n$ to $V^{n-k}$, is the dominant factor in our calculations (as we will need $k = \tower(O(\log(1/\eps)))$).
\begin{proof}
Fix a function $f$ and a set $I \subseteq [n]$, with $|I| \le k$ as in the statement of the lemma.
We construct a set $S \subseteq V^I$ and sets $J_x,T_x$ satisfying the requirements in Definition~\ref{def:shattering} as follows. 
Initially, we set $S=\emptyset$.
We consider all edges (including loops!) $(x_1,x_2)$ (i.e., all pairs $(x_1,x_2) \in V^I \times V^I$ with $w_{(x_1,x_2)} > 0$), in an arbitrary order. 
For each edge $(x_1,x_2)$
if either $x_1$ or $x_2$ is already in $S$, we continue to the next edge. 
Otherwise, we apply Corollary~\ref{cor:shattering} to the functions $f(x_1,\cdot)$ and $f(x_2,\cdot)$ 
with $\eps$ taken to be $\eps/32$, resulting in $i \in \{1,2\}$, $J \subseteq [n] \setminus I$, with $|J| \leq r_1(\eps/32)$, and  $T \subseteq V^J$. 
We then add $x_i$ to $S$,
and define $J_{x_i}$ to be $J$ and $T_{x_i}$ to be $T$. 
This completes the description of the construction. 
Notice that we are allowed to apply Corollary~\ref{cor:shattering} above with parameter $\eps$ set to $\eps/32$, since
\[
\ip{f(x_1,\cdot), Af(x_2,\cdot)} \le 
w_{\min}^{-|I|} \ip{f, Af} \le w_{\min}^{-k} \delta =
\delta_1(\eps/32) \; .
\]
Moreover, notice that $\overline{S}$ forms an independent set (since $S$ must contain at least one vertex of each edge), and since $f$ is matching-like, we have 
\[
\E_{x \in V^I}[\indic_{S}(x) \E[f(x,\cdot)]] = \eps - \E_{x \in V^I}[\indic_{\overline{S}}(x) \E[f(x,\cdot)]] \ge \eps/2.
\]
Finally, for all $x \in S$ we have by~\eqref{eq:corshattering2} that $\Pr[y \in T_x] \le 3/4$, and 
by~\eqref{eq:corshattering} that
\[
\E_{y \notin T_x}[ \E[f(x,y,\cdot)] ] = 
 \E_y[ \indic_{\overline{T_x}}(y)\E[f(x,y,\cdot)]  ] / \Pr_y[y \notin T_x] \le \frac{\eps/32}{1/4}
 = \eps/8.
\]
We conclude that $S$, $\{J_x\}$, and $\{T_x\}$ satisfy the requirements in Definition~\ref{def:shattering}, as required.
\end{proof}

Theorem~\ref{thm:mainbalanced} follows from Lemma~\ref{lem:shatteringsuffices} and Lemma~\ref{eq:shatterer}.
\begin{proof}[Proof of Theorem~\ref{thm:mainbalanced}]
Given a matching-like $f$, with $\E[f]=\eps$, let $r = r_2(\eps)$ be as given by Lemma~\ref{eq:shatterer}. 
Apply Lemma~\ref{lem:shatteringsuffices} with $f$ and $r$ to get a set $J$ of cardinality at most 
$k=k(r,\eps)= \tower(O(\log(1/\eps)))$ 
such that the correlation of $J$ with $f$ cannot be substantially improved by $r$-refinement. From Lemma~\ref{eq:shatterer} it follows that $\ip{f,Af} > \delta$, with $\delta=\delta_3(\eps,k) = (\tower(O(\log(1/\eps))))^{-1}$.
\end{proof}


\section{Kneser graphs are like Swiss cheese}\label{sec:Kneser}

In this section we prove Theorem~\ref{thm:Kneser}, which extends our main theorem to the case of Kneser graphs. 
Fix $0 < p <1/2$. We will consider the Markov chain on $\{0,1\}^n$  
which moves independently on each coordinate according to the transition matrix 
$$
 \left(\begin{array}{cc}\frac{1-2p}{1-p} & \frac{p}{1-p} \\ 1 & 0\end{array}\right). 
 $$
 The stationary measure of this Markov chain is the product measure $\mu_p = (1-p,p)^{\otimes n}$, and all transitions $(x,y)$ have probability 0 if $x$ and $y$ are not disjoint. If $x$ and $y$ are disjoint, then the weight of the edge $(x,y)$ is precisely $p^{|x|}p^{|y|}(1-2p)^{n-|x|-|y|}$. So, for two disjoint sets $x, y \subset [n]$, we define
 $$
 \mu_{p,p}(x,y):= p^{|x|}p^{|y|}(1-2p)^{n-|x|-|y|}.
 $$
Recall the following notation. Given a set of coordinates $J \subset [n]$, and two vectors $w \in \{0,1\}^J$, and $x \in \{0,1\}^{[n] \setminus J}$, we will write $(w,x)$ for the element of $\{0,1\}^n$ formed by merging them appropriately.
 The following, then,  is a special case of Corollary \ref{cor:ashkara}.
\begin{theorem}\label{thm:cube}
Let $ 0 < p < 1/2$. There exists  functions $\delta_p = \delta: [0,1] \to [0,1]$ and  $j_p = j : [0,1] \to \mathbb{N} $ such that the following holds. Let    $g : \{0,1\}^n \to [0,1]$, and let  
$$
\edge(g) = \sum_{x \cap y = \emptyset} g(x)g(y)\mu_{p,p}(x,y).
$$
Then for every $\eps \in [0,1]$, if $\edge(g) \leq \delta(\eps)$ then there exists $J \subset [n]$ with $|J| \leq j(\eps)$, and 
$T \subset \{0,1\}^J$ such that
\begin{enumerate}
\item $T$ is an  intersecting family.
\item
\[
\E_{w \in \{0,1\}^J}[\indic_{\overline{T}}(w) \E_{x \in \{0,1\}^{[n]\setminus J}}[g(w,x)]] \le \eps, 
\]
where all expectations are taken with respect to $\mu_p$. 
\end{enumerate}
\end{theorem}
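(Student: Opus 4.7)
My plan is to derive Theorem~\ref{thm:cube} as a direct specialization of Corollary~\ref{cor:ashkara}. I take $V=\{0,1\}$ and let $A$ be the $2\times 2$ transition matrix displayed immediately above the theorem statement; the constants $\delta_p,j_p$ will be obtained from the corollary applied to this specific $A$ (so the $p$-dependence is carried by the constants in the corollary, which depend on $A$). The first task is to verify the standing hypotheses of Section~\ref{sec:prelim}: reversibility follows from the detailed balance identity $(1-p)\cdot \tfrac{p}{1-p} = p\cdot 1$; irreducibility is immediate because both off-diagonal transition probabilities are positive; and aperiodicity holds because $p<1/2$ forces the self-loop probability $P_{00}=(1-2p)/(1-p)$ to be positive. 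A direct check confirms that $\mu_p$ is the stationary measure, so that all conventions in Section~\ref{sec:prelim} are in force.

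The next step is to translate $\edge(g)$ into the inner product $\ip{g,Ag}$ used throughout the paper. The single-coordinate edge weights $w_{(x,y)} = \mu_p(x)A_{xy}$ evaluate to $w_{(0,0)} = 1-2p$, $w_{(0,1)} = w_{(1,0)} = p$, and $w_{(1,1)} = 0$. Taking the $n$-fold tensor product, the weight of a pair $(x,y) \in \{0,1\}^n \times \{0,1\}^n$ is the product of single-coordinate weights. This product vanishes unless the subsets $x,y\subseteq[n]$ are disjoint, and in the disjoint case it equals $p^{|x|}p^{|y|}(1-2p)^{n-|x|-|y|}=\mu_{p,p}(x,y)$. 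Consequently, $\ip{g,Ag} = \edge(g)$.

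With the dictionary in place, I invoke Corollary~\ref{cor:ashkara} with parameter $\eps$. Setting $\delta_p(\eps)$ and $j_p(\eps)$ to the $\delta$ and $j$ output by the corollary, the hypothesis $\edge(g)\le \delta_p(\eps)$ produces a set $J\subseteq [n]$ with $|J|\le j_p(\eps)$ and $T\subseteq \{0,1\}^J$ satisfying $\ip{\indic_T,A\indic_T}=0$ and the expectation bound in conclusion (2) of the theorem (the latter is literally identical after identifying the two expectations with respect to $\mu_p$). For conclusion (1), the same edge-weight calculation, applied one level down on $\{0,1\}^J$, says that $\ip{\indic_T,A\indic_T}$ equals the sum of $\mu_{p,p}(x,y)$ over pairs $x,y\in T$ with $x\cap y=\emptyset$; this sum vanishes precisely when $T$ contains no pair of disjoint sets, i.e., when $T$ is an intersecting family.

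The argument is essentially a matching of definitions, so I do not anticipate a substantive obstacle. The only point requiring care is the identification of ``independent set in the product Markov graph'' with ``intersecting family'', which depends on the fact that $p>0$ and $1-2p>0$ make the single-coordinate weights positive on \emph{all} disjoint pairs and zero only on the pair $(1,1)$; this is precisely where the hypothesis $0<p<1/2$ is used, both to guarantee aperiodicity of $A$ and to ensure that the Markov-chain edge set coincides with the Kneser-type edge set of disjoint subsets.
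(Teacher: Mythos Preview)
Your proposal is correct and matches the paper's treatment exactly: the paper introduces Theorem~\ref{thm:cube} with the sentence ``The following, then, is a special case of Corollary~\ref{cor:ashkara}'' and gives no further proof, so your careful verification of the standing hypotheses on $A$, the identity $\edge(g)=\ip{g,Ag}$, and the identification of independent sets with intersecting families simply spells out what the paper leaves implicit.
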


The main theorem of this section is very similar, except it is set on a single layer of the cube $\{0,1\}^n$, i.e., on $\binom{[n]}{k}$. 

\begin{theorem}\label{thm:Kneser}
Let $ 0 < p < 1/2$ and let $\delta_p = \delta: [0,1] \to [0,1]$ and  $j_p = j : [0,1] \to \mathbb{N} $ be as in Theorem \ref{thm:cube}.  Let  $n$ and $k=pn$ be positive integers, and let  $G=G(n,k)$ be the Kneser graph with
$$
V(G) = \binom{[n]}{k}, E(G) = \{ \{x,y\} : x \cap y = \emptyset\}.
$$
Let $f : V(G) \to [0,1]$, and let  
\[
\edge(f):= \sum_{\{x,y\} \in E(G) } \frac{f(x)f(y)}{\binom{n}{k}\binom{n-k}{k}}  .
\]

Then for every $\eps \in [0,1]$, and $\edge(f) \leq \delta(\eps)$ then if $n$ is sufficiently large there exists $J \subset [n]$ with $|J| \leq j(\eps)$, and 
$T \subset \{0,1\}^J$  such that
\begin{enumerate}
\item $T$ is an intersecting family, and
\item 
$$
{\binom{n}{k}}^{-1} \sum_{(x \cap J) \not \in T} f(x)  \le  5 \eps \; .
$$
\end{enumerate}
\end{theorem}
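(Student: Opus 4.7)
The plan is to reduce Theorem~\ref{thm:Kneser} to Theorem~\ref{thm:cube} by lifting $f$ from the Johnson layer $\binom{[n]}{k}$ to a $[0,1]$-valued function on the biased cube $(\{0,1\}^n,\mu_p)$. First I would try the ``dirty'' extension $g:\{0,1\}^n\to[0,1]$ defined by $g(x):=f(x)\indic_{|x|=k}$. A direct unpacking of the two definitions of $\edge$ gives
\[
\edge(g)\;=\;2\binom{n}{k}\binom{n-k}{k}\,p^{2k}(1-2p)^{n-2k}\cdot\edge(f),
\]
and since the combinatorial prefactor equals the $\mu_{p,p}$-probability that both endpoints of a random cube-edge lie on layer $k$, it is at most~$1$. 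Hence $\edge(g)\le 2\edge(f)\le\delta(\eps)$ after absorbing the factor of $2$ into constants, and Theorem~\ref{thm:cube} applied to $g$ produces $J\subseteq[n]$ with $|J|\le j(\eps)$ and an intersecting family $T\subseteq\{0,1\}^J$ such that
\[
\sum_{w\notin T}\mu_p(w)\,\E_x[g(w,x)]\;\le\;\eps.
\]

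The next step is to translate this $\mu_p$-weighted bound into the uniform-on-layer bound the theorem demands. Write $f_w:=\operatorname{avg}\{f(z):z\in\binom{[n]}{k},\ z\cap J=w\}$. A direct computation based on the product structure of $\mu_p$ yields
\[
\mu_p(w)\,\E_x[g(w,x)]\;=\;p^k(1-p)^{n-k}\binom{n-|J|}{k-|w|}f_w\;=\;N_k\,\operatorname{Hyp}(w)\,f_w,
\]
where $N_k:=\binom{n}{k}p^k(1-p)^{n-k}$ and $\operatorname{Hyp}(w):=\Pr_{z\sim\binom{[n]}{k}}[z\cap J=w]$. The quantity to be bounded is exactly $\sum_{w\notin T}\operatorname{Hyp}(w)f_w=\binom{n}{k}^{-1}\sum_{z\cap J\notin T}f(z)$, and the cube inequality translates into $\sum_{w\notin T}\operatorname{Hyp}(w)f_w\le\eps/N_k$.

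The main obstacle is that Stirling's formula gives $N_k=\Theta(1/\sqrt n)$, so the dirty extension only yields a bound of order $\eps\sqrt n$ instead of the target $5\eps$. To close this gap I would replace $g$ by a tailored extension whose $\mu_p$-mass is spread across a window of $\Theta(\sqrt n)$ layers around $k$, constructed so that $\E_x[g(w,x)]$ is bounded below by a constant multiple of $f_w$ itself (rather than by $N_k\,f_w$) while simultaneously preserving $\edge(g)=O(\edge(f))$ and $g\in[0,1]$. Concretely, one could try composing $f$ with a ``project-to-layer-$k$'' map $\pi:\{0,1\}^n\to\binom{[n]}{k}$ defined for $|x|\ge k$ (such as the first-$k$-elements map, symmetrized over a random relabeling of $[n]$), exploiting the fact that $\pi(x)\subseteq x$ forces disjointness of $\pi(x)$ and $\pi(y)$ whenever $x$ and $y$ are disjoint. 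Combined with the standard measure-comparison $\operatorname{Hyp}(w)/\mu_p(w)=1+O(|J|^2/n)$, valid for $|J|$ bounded and $n$ large, this would yield the claimed bound of $5\eps$; the factor $5$ accumulates from the $O(1)$ losses in that measure comparison and in adjusting the parameter of Theorem~\ref{thm:cube} so that $|J|\le j(\eps)$ survives. The hardest step I expect is verifying both properties of the tailored extension simultaneously, since the natural candidates trade small expectation against large edge count; but only an $O(1)$ gain in the ratio $\E_x[g(w,x)]/f_w$ is needed.
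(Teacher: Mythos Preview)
Your plan is essentially the paper's proof. The ``tailored extension'' you describe---compose $f$ with a project-to-layer-$k$ map symmetrized over a random relabeling---is, once you unwind it, exactly the averaging $g(x)=\binom{|x|}{k}^{-1}\sum_{x'\subseteq x,\,|x'|=k}f(x')$ for $|x|\ge k$, which is the paper's Up Lemma. Your observation that $\pi(x)\subseteq x$ preserves disjointness is precisely why $\edge(g)\le\edge(f)$: unwinding the double sum and exchanging order shows $\edge(g)=c(p,n)\,\edge(f)$ for a constant depending only on $p,n$, and plugging in $f\equiv 1$ identifies $c(p,n)$ as $\edge(\indic_{|x|\ge k})\le 1$.

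The step you flag as hardest is the paper's Down Lemma, and it is not as delicate as you fear; in particular you do not need the hypergeometric--binomial comparison $\operatorname{Hyp}(w)/\mu_p(w)=1+O(|J|^2/n)$. Writing $V_w(g)=\sum_x g(w,x)\mu_p(w,x)$ and expanding $g$, one lower-bounds by keeping only the inner summands with $w'=w$, switches the order of summation, and is left with $V_w(f)$ times a purely combinatorial factor
\[
\sum_{x\supseteq x'}\frac{\mu_p(w,x)\binom{n}{k}}{\binom{|x|+|w|}{k}}
\;=\;\sum_{i\ge 0}\binom{n}{k+i}p^{k+i}(1-p)^{n-k-i}\cdot\frac{\bigl((n-k)-(|J|-|w|)\bigr)_i}{(n-k)_i}.
\]
The binomial tail $\sum_{i\ge 0}\binom{n}{k+i}p^{k+i}(1-p)^{n-k-i}$ tends to $1/2$ by the central limit theorem, and for $i$ up to $\log n\cdot\sqrt{np(1-p)}$ the falling-factorial ratio is $1-o(1)\ge 1/2$ once $n$ is large (here is where $|J|\le j(\eps)$ bounded is used). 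This gives $V_w(g)\ge(1/4-o(1))V_w(f)\ge V_w(f)/5$, which is exactly the factor~$5$ in the statement. No further adjustment of the parameter in Theorem~\ref{thm:cube} is needed.
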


We will show how to deduce Theorem \ref{thm:Kneser} from Theorem \ref{thm:cube}.
First, we need two lemmas, regarding moving from functions on a single layer to functions on the whole cube, and vice versa.
\begin{lemma}[The Up Lemma]\label{lem:up}
Let $k = p n$ for $0 < p < 1/2$.
For $f: \binom{[n]}{k}\to [0,1] $, define $g: \{0,1\}^n \to [0,1] $ by
$$
g(x): = \left\{\begin{array}{cc}
\binom{|x|}{k}^{-1} \sum_{x' \subseteq x} f(x')  & |x| \ge k
\\0 & \mbox{otherwise.} \end{array}
\right. 
$$ 
Then 
\[
\edge(g) \le \edge(f) \; .
\]

\end{lemma}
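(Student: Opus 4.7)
The plan is to rewrite both sides probabilistically and exploit the $S_n$-invariance of the construction in order to identify the random $k$-subsets arising from averaging $f$ as a uniform sample.

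First, I would view $\mu_{p,p}$ as the law of a pair $(X,Y)$ of disjoint random subsets of $[n]$ obtained by placing each coordinate $i \in [n]$ independently in $X$, in $Y$, or in neither, with probabilities $p$, $p$, $1-2p$. With this interpretation, $\edge(g) = \E_{(X,Y) \sim \mu_{p,p}}[g(X) g(Y)]$, viewing the sum as running over ordered disjoint pairs (so that $\mu_{p,p}$ is a probability measure).

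The next observation is that, by the very definition of $g$, we have $g(x) = \E[f(X') \mid X = x]$, where $X'$ is drawn uniformly from $\binom{x}{k}$ if $|x| \ge k$, and $g(x) = 0$ otherwise. Drawing $X'$ and $Y'$ independently as uniform $k$-subsets of $X$ and $Y$ respectively (extended arbitrarily off the event $\{|X|, |Y| \ge k\}$), the tower rule yields
\[
\edge(g) \;=\; \E\bigl[f(X') f(Y')\, \indic_{\{|X|, |Y| \ge k\}}\bigr].
\]

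The crux is now a one-line symmetry argument. The joint law of $(X,Y,X',Y')$, as well as the event $\{|X|, |Y| \ge k\}$, is invariant under the diagonal action of $S_n$ on $[n]$. Since $S_n$ acts transitively on the set of ordered pairs of disjoint $k$-subsets of $[n]$, it follows that the conditional distribution of $(X', Y')$ given this event is uniform on that set. Therefore
\[
\edge(g) \;=\; \Pr\bigl[|X|, |Y| \ge k\bigr] \cdot \frac{1}{\binom{n}{k}\binom{n-k}{k}} \sum_{(x,y):\, x \cap y = \emptyset} f(x) f(y) \;\le\; \edge(f),
\]
using that the leading probability is at most $1$ and recognizing the normalized sum as $\edge(f)$.

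There is no serious obstacle here; the symmetry step loses no constant and requires no calculation. The only point that needs care is matching the conventions (ordered vs.\ unordered disjoint pairs) in the two definitions of $\edge$ so that the normalizations in the final comparison line up correctly.
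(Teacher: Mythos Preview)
Your argument is correct and lands on exactly the identity the paper proves, namely $\edge(g)=c\cdot\edge(f)$ with $c=\Pr_{\mu_{p,p}}[|X|\ge k,\ |Y|\ge k]\le 1$. The paper's route is slightly different in packaging: rather than invoking $S_n$-invariance, it expands $\edge(g)$, swaps the order of summation so that the outer sum runs over disjoint $k$-sets $(x',y')$, observes by inspection that the inner sum $c(p,n)$ is independent of both $(x',y')$ and $f$, and then evaluates the constant by plugging in $f\equiv 1$ (which makes $g=\indic_{|x|\ge k}$, whence $c(p,n)=\edge(g)\le 1$). Your symmetry argument names the constant explicitly and explains \emph{why} the inner sum is independent of $(x',y')$; the paper's plug-in-$f\equiv 1$ trick is a quick way to bound a constant without having to identify it. Your caution about ordered versus unordered pairs is well placed: both arguments implicitly treat $\edge(f)$ as a sum over ordered disjoint $k$-pairs, which is what makes $\edge(f)=1$ when $f\equiv 1$.
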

\begin{proof}
For any $f$ and the corresponding $g$,
\begin{align*}
 \edge(g)& = \sum_{x \cap y = \emptyset}\mu_{p,p}(x,y) \left(\binom{|x|}{k}\binom{|y|}{k}\right)^{-1}\sum_{x' \subseteq x, y' \subseteq y} f(x')f(y') \\
&=
 \sum_{ x' \cap y' = \emptyset}c(p,n) \frac{ f(x')f(y')}{\binom{n}{k}\binom{n-k}{k}}=c(p,n)  \edge(f) \;, 
\end{align*}
 where 
\[
c(p,n) =  \sum_{x \cap y = \emptyset, x' \subseteq x, y' \subseteq y}\mu_{p,p}(x,y) \frac{\binom{n}{k}\binom{n-k}{k}} {\binom{|x|}{k}\binom{|y|}{k}}
\]
is independent of the pair $(x',y')$ and of the function $f$.
 Plugging in the case $f\equiv 1$, where $\edge(f)=1$, $g(x)={\bf 1}_{|x| \ge k}$ gives
 $$
 c(p,n) = \edge(g) \le 1,
 $$
since $g \le 1$
\end{proof}

\begin{lemma}[The Down Lemma]\label{lem:down}
Let $k=pn$, let $f$ and $g$ be as above, and let $J \subset [n]$, and $w \in \{0,1\}^J$. 
Define
$$
V_w(g):=  p^{|w|}(1-p)^{|J|-|w|}\E_{x \in \{0,1\}^{[n] \setminus J}} [g(w,x)]= \sum_{x \in \{0,1\}^{[n] \setminus J}}  g(w,x)\mu_p(w,x) \; .
$$
Let
$$
V_w(f) := \sum_x \frac {f(w,x)}{\binom{n}{k}},
$$
where the sum is over $x \in \{0,1\}^{[n] \setminus J}$ of size precisely $k - |w|$.
Then, for sufficiently large $n$,
 $$ V_w(f) \le  5 V_w(g) \; .$$
\end{lemma}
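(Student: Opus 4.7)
The plan is to lower-bound $V_w(g)$ by keeping only the contribution of $k$-subsets $y\in\binom{[n]}{k}$ whose intersection with $J$ equals $w$, and then to compare the resulting quantity directly with $V_w(f)$.

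Using the definition of $g$, I will write $V_w(g)=\mu_p(w)\,\E_X[g(w,X)]$ where $X$ is drawn from $\mu_p$ on $\{0,1\}^{[n]\setminus J}$, and introduce an auxiliary random set $Y$ which, conditional on $X$, is a uniformly chosen $k$-subset of $w\cup X$ whenever $|w|+|X|\ge k$. Then
\[
V_w(g)=\mu_p(w)\,\E\bigl[f(Y)\,\indic[|w|+|X|\ge k]\bigr].
\]
Restricting the expectation to the event $\{Y\supseteq w\}$ yields a lower bound. By the symmetry of $\mu_p$ under permutations of $[n]\setminus J$, the conditional distribution of $Y\setminus w$ given $\{Y\supseteq w,\,|X|\ge k-|w|\}$ is uniform on the $(k-|w|)$-subsets of $[n]\setminus J$. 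Writing $\bar f_w$ for the average of $f(w,\cdot)$ over those subsets, we have the identity $V_w(f)=\binom{n}{k}^{-1}\binom{n-j}{k-|w|}\,\bar f_w$, so with $p_w:=\Pr\bigl[Y\supseteq w,\,|w|+|X|\ge k\bigr]$,
\[
\frac{V_w(g)}{V_w(f)}\;\ge\;\mu_p(w)\,p_w\,\frac{\binom{n}{k}}{\binom{n-j}{k-|w|}}.
\]

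What remains is to show this lower bound is at least $1/5$ for $n$ sufficiently large; in fact I expect it to tend to $1/2$. The ratio $\binom{n-j}{k-|w|}/\binom{n}{k}$ tends to $\mu_p(w)$ as $n\to\infty$ with $j,|w|$ fixed and $k=pn$ (an elementary computation), so the first and third factors combine to $1+o(1)$. For $p_w$ I will condition on $|Z|$ where $Z:=w\cup X$: a uniform $k$-subset of $Z$ contains $w$ with probability $\binom{|Z|-|w|}{k-|w|}/\binom{|Z|}{k}=\binom{k}{|w|}/\binom{|Z|}{|w|}$, which tends to $1$ whenever $|Z|-k=o(k)$. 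Now $|X|\sim\mathrm{Bin}(n-j,p)$ has mean $p(n-j)=k-pj$ and standard deviation $\Theta(\sqrt{n})$, so the central limit theorem gives $\Pr[|Z|\ge k]\to 1/2$ (the threshold differs from the mean only by the constant $pj-|w|$), while on that event the correction ratio tends to $1$ in expectation. Hence $p_w\to 1/2$, so $V_w(g)/V_w(f)\to 1/2$, which is comfortably at least $1/5$ for $n$ large enough.

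The main technical step will be making the CLT estimate for $p_w$ quantitatively precise for a specific $n$; because the constant $5$ in the target inequality lies well above the limiting ratio $2$, crude estimates suffice, and all the asymptotics use nothing beyond Stirling's formula and the CLT for a binomial.
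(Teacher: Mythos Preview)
Your proposal is correct and follows essentially the same approach as the paper's proof: both lower-bound $V_w(g)$ by keeping only those $k$-subsets of $(w,x)$ that contain all of $w$ (your restriction to $Y\supseteq w$ is the paper's restriction to $w'=w$), then use symmetry to reduce to $V_w(f)$ times a constant factor, and finally invoke the CLT for the binomial to show that factor tends to $1/2$. The only difference is cosmetic: you phrase the argument probabilistically via the auxiliary pair $(X,Y)$ and an exchangeability observation, whereas the paper writes out the double sum, reverses the order of summation, and checks directly that the inner sum is independent of $x'$ and at least $1/4-o(1)$.
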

\begin{proof}
Throughout this proof, when summing over $x$, we are restricting ourselves to the case $|x| + |w| \ge k$, since other values of $x$ contribute nothing.
Observe that
\begin{align}
V_w(g) 
&= 
\sum_x g(w,x)\mu_p(w,x) \nonumber \\
&=
\sum_x \frac{\mu_p(w,x)}{\binom{|x|+|w|}{k}} \sum_{w' \subseteq w, x' \subseteq x} f(w',x') \nonumber  \\
&\ge 
\sum_x \frac{\mu_p(w,x)}{\binom{|x|+|w|}{k}} \sum_{x' \subseteq x} f(w,x')  \nonumber  \\
&= \sum_{x'} \frac{f(w,x')}{\binom{n}{k}} \left( \sum_{x \supseteq x'}  \frac{\mu_p(w,x)\binom{n}{k}}{\binom{|x|+|w|}{k}}  \right) \; .\label{eq:vgvf} 
\end{align}
where in the last equality we reversed the order of summation. 
Since the first sum in Eq.~\eqref{eq:vgvf} is precisely $V_w(f)$, it suffices
to show that the second sum (which only depends on $p$, $|J|$, and $|w|$) 
is at least $1/5$. 
To this end observe that
$$
 \sum_{x \supseteq x'}  \frac{\mu_p(w,x)\binom{n}{k}}{\binom{|x|+|w|}{k}}  = 
 \sum_{i=0}^{n-k-(|J|-|w|)} p^{k+i}(1-p)^{n-k-i}\binom{n}{k+i} \frac{((n-k)-(|J|-|w|))_i}{(n-k)_i}.
$$
First note that  $\sum_{i=0}^{ \log(n) \sqrt{np(1-p)}} p^{k+i}(1-p)^{n-k-i}\binom{n}{k+i}$ tends to $1/2$ by the central limit theorem.
Next, for any $i <  \log(n) \sqrt{np(1-p)} $ we have  $\frac{((n-k)-(|J|-|w|))_i}{(n-k)_i} \sim1$ and in particular, for sufficiently large $n$, and $i$ in that range,
$$
\frac{((n-k)-(|J|-|w|))_i}{(n-k)_i}  > 1/2.
$$
So
 $$
 V_w(g) \ge V_w(f) (1/4 -o(1)) .
 $$
\end{proof}

Using the up-lemma  and the down-lemma  we now deduce Theorem \ref{thm:Kneser} from Theorem \ref{thm:cube}.

\begin{proof}[Proof of Theorem \ref{thm:Kneser}]
Let $p$ and $f$ be as in the statement of the theorem, let $\eps \ge 0$ and assume $n$ is sufficiently large and $\edge(f) \le \delta(\eps)$, where 
$\delta(\eps)$ is as defined in Theorem \ref{thm:cube}.  
Let $g$ be as given by the up lemma, Lemma \ref{lem:up}. 
Then $\edge(g) \le \edge(f) \le \delta(\eps)$.
Now, invoke Theorem \ref{thm:cube} to produce $J \subset [n]$ and an intersecting family $T \subset \{0,1\}^J$ which captures $g$, i.e.,
 $$
\E_{w \in \{0,1\}^J}[\indic_{\overline{T}}(w) \E_{x \in \{0,1\}^{[n]\setminus J}}[g(w,x)]] \le \eps, 
$$
or, in other words
$$
\sum_{w\not \in T} V_w(g) \leq \eps \; .
$$
By the down lemma, Lemma \ref{lem:down}, for every $w \in \{0,1\}^J$ (and specifically for $w \not \in T$) 
we have 
$$V_w(f) \le 5 V_w(g)$$
so
$$
\sum_{w\not \in T} V_w(f)   \le 5\eps
$$
as required.
\end{proof}

\appendix
\section*{Appendix}

\section{Theorem~\ref{thm:bipartitedfr}}
\label{sec:dfr}

Theorem~\ref{thm:bipartitedfr} is basically the main result of~\cite{DinurFR06},
apart from some minor differences, the most significant of which being 
that we improve the quantitative dependence of the 
parameters (namely, the functions $\delta_1$ and $j_1$) using the work of Dinur and Shinkar~\cite{DinurS10}. 
For the reader's convenience, we include a proof sketch in Section~\ref{sec:dfrproof}.

Alternatively, we now explain how to derive Theorem~\ref{thm:bipartitedfr} from the original statement in~\cite{DinurFR06}, 
which now follows. 

\begin{theorem}[{\cite[Theorem 1.1 + 2nd and 4th remarks there]{DinurFR06}}]\label{thm:originaldfr}
For all $\eps>0$ there exist $\delta>0, j \ge 1$, such that the following holds.
For all $n \ge 1$ and $f:V^n \to \{0,1\}$ such that $\ip{f,Af} \le \delta$, there exist $J \subseteq [n]$, $|J| \le j$, and $T \subseteq V^J$ such that 
\[
\E_{x \in V^J}[\indic_{\overline{T}}(x) \E [f(x,\cdot)]] \le \eps, 
\]
and $\ip{\indic_{T}, A\indic_{T}} \le \eps$.
\end{theorem}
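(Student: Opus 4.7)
The plan is to follow the approach of~\cite{DinurFR06}, combining the invariance principle of Mossel, O'Donnell, and Oleszkiewicz~\cite{MOO05} (as used in Dinur, Mossel, and Regev~\cite{DMR}) with a junta-identification step. The conceptual core is a dichotomy: either $f$ is concentrated on a bounded-size junta, or its low-degree Fourier mass is sufficiently spread out that invariance transfers the problem to Gaussian space, where Borell's isoperimetric inequality forces $\ip{f,Af}$ to be bounded below by a function of $\E[f]$ alone, contradicting the hypothesis $\ip{f,Af}\le\delta$ for small $\delta$.

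First I would diagonalize $A$ in an orthonormal eigenbasis $\{\chi_0 = 1, \chi_1,\ldots\}$ with eigenvalues $1 = \lambda_0 > |\lambda_1| \ge \cdots$ (with $|\lambda_1|<1$ by irreducibility and aperiodicity), extend to $V^n$ by tensor products, and write $f = \sum_\sigma \hat f(\sigma)\chi_\sigma$ so that $\ip{f,Af} = \sum_\sigma (\prod_i \lambda_{\sigma_i})\hat f(\sigma)^2$. Picking a noise parameter $\rho$ slightly exceeding $|\lambda_1|$, I define the $\rho$-influence $\text{Inf}_i^\rho(f)$ of each coordinate, and let $J = \{i : \text{Inf}_i^\rho(f) > \tau\}$ for a threshold $\tau=\tau(\eps)$. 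The standard bound $\sum_i \text{Inf}_i^\rho(f) = O(1/\log(1/\rho))$ (valid for $f\in[0,1]$) caps $|J|$ by some $j=j(\eps)$ depending only on $\eps$ and $A$, yielding the junta-size bound.

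Next I would define $T = \{x \in V^J : \E[f(x,\cdot)] \ge \eps'\}$ for a suitable threshold $\eps' = \eps'(\eps)$. The first conclusion $\E_x[\indic_{\overline{T}}(x)\E[f(x,\cdot)]] \le \eps$ is then immediate by the choice of $\eps'$. For each $x \in T$, the restricted function $f_x:=f(x,\cdot)$ has all $\rho$-influences at most $\tau$; the invariance principle of~\cite{MOO05,DMR} lets me approximate the low-degree part of $f_x$ by a Gaussian polynomial of the same expectation, and Borell's theorem gives $\ip{f_x, A f_x} \ge c(\eps')$ for a positive constant $c(\eps')$. An analogous bipartite lower bound applies to any pair $x_1,x_2\in T$ joined by a positive-weight transition in $V^J$. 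Summing these contributions against the global budget $\ip{f,Af}\le\delta$ (after restoring the transition weights on $J$-coordinates) forces both $\mu(T)$ and $\ip{\indic_T, A\indic_T}$ to be small, giving the second conclusion $\ip{\indic_T, A\indic_T}\le\eps$ provided $\delta$ was chosen small enough as a function of $\eps$.

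The main obstacle is the invariance-plus-Borell reduction itself, because $A$ is a general reversible Markov chain rather than a pure noise operator: its spectral weights do not match the $\rho$-noise weights governing the invariance principle, so one must split $f$ into low- and high-degree parts, damp the tail via hypercontractivity, and run the invariance argument on a related noise operator while tracking how this affects $\ip{\cdot,A\cdot}$. A secondary technical issue is that a Boolean $f$ need not remain Boolean after projection onto the low-degree part, which forces one to work with $[0,1]$-valued proxies and invoke a ``Boolean version'' of Borell (or a rounding argument) at the final step; this is the source of the bulk of the bookkeeping in~\cite{DinurFR06}.
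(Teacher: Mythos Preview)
Your plan follows the broad architecture of the paper's proof (apply a noise operator, collect the globally influential coordinates into $J$, threshold conditional expectations to define $T$), but there is a genuine gap in the restriction step. You assert that for each $x\in T$ the restricted function $f_x=f(x,\cdot)$ has all $\rho$-influences at most $\tau$. This does not follow: the global $\rho$-influence of a coordinate $i\notin J$ on $f$ is an \emph{average} over $x\in V^J$ of its influence on $f_x$, so $i$ can have arbitrarily large influence on particular restrictions while remaining below the threshold $\tau$ globally. Since membership in $T$ is determined by a threshold on $\E[f(x,\cdot)]$ and not on influences, there is no reason the ``bad'' restrictions should avoid $T$. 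Without the low-influence hypothesis on $f_x$ you cannot invoke the invariance principle, and the Borell-type lower bound $\ip{f_x,Af_x}\ge c(\eps')$ (or its bipartite analogue) need not hold, so the summing-against-the-budget step collapses.

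The paper's proof avoids this trap precisely by not asserting uniform low influence after restriction. It packages the invariance-plus-Borell step as the black box Lemma~\ref{lem:quantmoo}: whenever a restricted pair $g_a,g_b$ has expectations at least $\eps$ and $\ip{g_a,Ag_b}<\delta_{\moo}(\eps)$, some coordinate is simultaneously influential on both. Assuming for contradiction that $\ip{\indic_T,A\indic_T}>\eps$, one obtains for an $\Omega(\eps)$-measure of edges $(a,b)$ a common element in the bounded-size lists $L(a),L(b)$ of coordinates influential on the noisy restrictions at $a$ and $b$. A separate hypercontractivity-based list-intersection lemma (Lemma~\ref{lem:hypercontra}) then extracts a single coordinate $i\notin J$ lying in $L(a)$ for a polynomially large fraction of $a$, which forces the global influence of $i$ above the threshold and contradicts the definition of $J$. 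This aggregation step is the missing ingredient in your outline; without it the argument does not close.
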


The differences between this and our Theorem~\ref{thm:bipartitedfr} are as follows.
First, our theorem considers functions with range $[0,1]$ as opposed to $\{0,1\}$. 
The proof in~\cite{DinurFR06} actually applies to the more general case, as is easy to check. 
Alternatively, one can derive the more general case from the restricted one by replacing a function 
$f:V^n \to [0,1]$ with the function $f':V^{n+m}\to \{0,1\}$ where we define
$f'(x,y)$ to be $1$ with probability $f(x)$ and $0$ otherwise, independently over all $x,y$.
Then as $m$ goes to infinity, $\ip{f',Af'}$ converges to $\ip{f,Af}$ and similarly for the other
expressions appearing in the theorem. 

A second difference is that our theorem involves two functions $f_1,f_2$ as opposed to just one as above.  
The proof in~\cite{DinurFR06} can easily be modified to handle this. Alternatively, as before, 
we can derive this from the original statement as follows. Let $a_1,a_2$ be two elements of $V^2$ 
that are connected by an edge and have no self loops. (Such two elements must exist unless
we are in the case in which there is a loop on all vertices in $V$, which means there are no non-empty independent sets in any power of $V$, so this case is irrelevant for our current discussion.)
Then given $f_1,f_2:V^n \to [0,1]$ we define $f:V^{n+2} \to [0,1]$
by taking $f(a,x)$ to be $f_1(x)$ if $a=a_1$, $f_2(x)$ if $a=a_2$, and $0$ otherwise. 
Then $\ip{f,Af}=2w \ip{f_1,Af_2}$ where $w$ is the weight of the edge connecting $a_1$ to $a_2$. 

The final and most significant difference is that Theorem~\ref{thm:originaldfr} does not 
explicitly specify the dependence of $\delta$ and $j$ on $\eps$. 
Inspecting the proof in~\cite{DinurFR06} reveals that the dependence is superpolynomial.
By using an improvement by Dinur and Shinkar~\cite{DinurS10} of the technical statement 
from~\cite{DMR}, we are able to obtain a polynomial dependence of the parameters,
as stated in Theorem~\ref{thm:bipartitedfr}.
We remark that this improvement
has no noticeable effect on the final bound in our main result and we could have used the original bound implicit in~\cite{DinurFR06}; 
we decided to include the improvement as it might be useful for future work. 

\newcommand{\gd}[0]{\delta }
\def\moo{{\textsc{moo}}}
\newcommand{\Emu}[0]{{\mathbb{E}}_\mu}
\newcommand\ipmu[1]{{\langle {#1} \rangle}_{\mu}}
\def\inf{\mathrm{Inf}}

In slightly more detail, the parameters in the proof of Theorem 1.1 in~\cite{DinurFR06} 
all depend polynomially on the functions $\tau_\moo$ and $\delta_\moo$ defined in Theorem~2.2 there.
Those functions can be taken to be polynomial, as shown in the following lemma. 

\begin{lemma}[{Quantitative version of \cite[Theorem~2.2]{DinurFR06}}]
\label{lem:quantmoo}
There exist functions
$\delta_{\moo}(\eps) > 0$ and $\tau_{\moo}(\eps) > 0$
such that for any $\eps >0$, $n\ge 1$, and functions $g_1,g_2 : V^n \to [0,1]$
with $\E[g_1] \ge \eps$, $\E[g_2] \ge \eps$ and
$\ip{g_1, Ag_2} < \delta_{\moo}(\eps) $, there exists a coordinate
$i$ with influence greater than $\tau_{\moo}(\eps)$ on both functions, i.e.,
\[
\inf_i(g_1) > \tau_{\moo}(\eps) \mbox{~~and~~} \inf_i(g_2) > \tau_{\moo}(\eps).
\]
Moreover, one can take 
$\delta_{\moo}(\eps) = \eps^c$ and $\tau_{\moo}(\eps) = \eps^c$ for
some constant $c>0$ depending only on $A$.
\end{lemma}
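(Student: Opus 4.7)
The plan is to follow the proof of Theorem~2.2 in~\cite{DinurFR06} essentially unchanged in its logical structure, but to substitute the quantitatively sharper invariance principle of Dinur and Shinkar~\cite{DinurS10} for the original application of the MOO invariance principle~\cite{MOO05}. The target is a lower bound of the form $\ip{g_1, Ag_2} \ge \eps^{O(1)}$ under the assumption that no coordinate has influence exceeding $\eps^{O(1)}$ on both functions.

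I would argue by contrapositive. Assume $\E[g_1], \E[g_2] \ge \eps$ and that for every coordinate $i$, at least one of $\inf_i(g_1), \inf_i(g_2)$ is below $\tau = \eps^c$. The first step is a noise-smoothing reduction: replace $g_i$ by $T_{1-\gamma}g_i$ for a small $\gamma = \gamma(\eps)$. This truncates the Efron--Stein / Hoeffding decomposition of $g_i$ to effective degree $d = O(\gamma^{-1}\log(1/\eps))$ while changing the inner product $\ipmu{g_1, Ag_2}$ and the expectations only by an amount polynomial in $\eps$, since the noise operator commutes with $A^{\otimes n}$ on each Efron--Stein level and can be absorbed into the spectral analysis of $A$.

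Second, with both functions effectively of low degree and low joint maximum influence, apply the quantitative invariance principle from~\cite{DinurS10} to the bilinear functional $\ipmu{g_1, A^{\otimes n} g_2}$, comparing it to the analogous Gaussian bilinear form on the ensemble of independent Gaussians matching the first two moments of the eigenbasis of $A$. The key reason this yields polynomial $\delta_{\moo}, \tau_{\moo}$ (rather than super-polynomial) is that~\cite{DinurS10} bounds the invariance error by a polynomial function of the maximum influence and the truncation degree, whereas the original~\cite{MOO05} statement used in~\cite{DinurFR06} has a tower- or exponential-type dependence. One applies the invariance principle in bilinear form, either by directly re-running the MOO argument on the product functional or, more simply, by polarization: bounding $\ipmu{g_1 + g_2, A(g_1 + g_2)}$ and $\ipmu{g_1 - g_2, A(g_1 - g_2)}$ separately.

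Third and finally, lower bound the Gaussian analogue. The constant (degree-zero) components of $g_1$ and $g_2$ contribute exactly $\E[g_1]\E[g_2] \ge \eps^2$ to the Gaussian bilinear form, and the remaining eigencomponents contribute a quantity whose magnitude is controlled by the Gaussian noise stability of $T_{\lambda_1}$ (where $\lambda_1$ is the second-largest eigenvalue of $A$ in absolute value); by Cauchy--Schwarz in Hermite-level representation, the cross-terms cannot cancel the constant contribution once the truncation degree and noise are chosen correctly. Combining these steps and tracking constants gives $\ipmu{g_1, Ag_2} \ge \eps^{c'}$. The main obstacle is bookkeeping: one must choose $\gamma$, $d$, $\tau$, and the invariance-error parameter simultaneously so that each of the three polynomial losses (smoothing, truncation, invariance) has an exponent bounded by a single constant $c$ depending only on the spectral gap and minimum transition weight of $A$. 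This is where the Dinur--Shinkar estimate is essential, since a super-polynomial invariance bound would force $\tau$ to be super-polynomially small.
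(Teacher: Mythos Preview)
Your three-step outline (noise smoothing, invariance, Gaussian lower bound) has the right architecture, but Step~3 as you have written it does not work, and this is not a bookkeeping issue. The Gaussian bilinear form is $\E[G_1]\E[G_2]$ plus a sum over nonconstant Hermite levels that can be \emph{negative}; Cauchy--Schwarz on the Hermite expansion only gives $|\text{cross-terms}| \le \|G_1\|_2\|G_2\|_2 \le 1$, which is useless against an $\eps^2$ main term. Concretely, already on one coordinate the functions $g_1=g_2=\tfrac12+\tfrac12 x$ have $\E[g_i]=\tfrac12$ but $\ip{g_1,T_\rho g_2}=\tfrac14+\tfrac{\rho}{4}$, which is arbitrarily small for $\rho$ close to $-1$; nothing in your noise or truncation choices prevents the analogous cancellation. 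The actual Gaussian lower bound requires a genuine isoperimetric input---Borell's two-set noise stability inequality, or equivalently the estimate $\ip{F_\eps,U_{\rho'}(1-F_{1-\eps})}_\gamma \ge \eps^C$ that \cite{DinurS10} records as their Eq.~(2). Relatedly, you misattribute the source of the polynomial improvement: the MOO invariance error was already polynomial in the influence parameter; what \cite{DinurS10} sharpen is precisely the Gaussian endpoint bound, from an $\exp(-\Theta(1/(1-\rho)))$-type quantity to $\eps^C$.

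The paper bypasses all of this by treating \cite[Theorem~3.1]{DinurS10} as a black box: that theorem already packages smoothing, invariance, and the correct Gaussian lower bound into a single contrapositive statement, and the paper simply applies it with $\rho'=\rho^{1/2}$, invokes Eq.~(2) of \cite{DinurS10} for the $\eps^C$ bound, and notes $\inf_i^{\le k}(f)\le \inf_i(f)$. In fact the paper points out that the desired lemma is essentially \cite[Corollary~3.2]{DinurS10}. If you want to keep your from-scratch route, replace your Step~3 by thresholding the Gaussian polynomials back to $[0,1]$ (controlling the truncation error via hypercontractivity) and then applying the two-set Borell inequality; but the cleaner move is to cite \cite[Theorem~3.1]{DinurS10} directly.
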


This lemma is an immediate corollary of~\cite[Theorem 3.1]{DinurS10}, and is derived in precisely the same
way that~\cite[Theorem~2.2]{DinurFR06} is derived from~\cite[Theorem 3.1]{DMR}.
(In fact, a statement nearly identical to Lemma~\ref{lem:quantmoo} already appears as
Corollary 3.2 in~\cite{DinurS10}.)
In more detail and using the notation from~\cite{DinurS10}, to derive the lemma above, 
apply~\cite[Theorem 3.1]{DinurS10} in the contrapositive with some $\rho'>\rho$, say $\rho' = \rho^{1/2}$,
notice that $\langle F_\eps , U_{\rho'}(1-F_{1-\eps}) \rangle_\gamma$ is at least $\eps^C$
by~\cite[Eq.~(2)]{DinurS10}, and that by definition $\inf_i^{\le k}(f) \le \inf_i(f)$.

\subsection{Proof of Theorem~\ref{thm:bipartitedfr}}
\label{sec:dfrproof}

Here we include a proof sketch of Theorem~\ref{thm:bipartitedfr},
closely following the original proof in~\cite{DinurFR06} and occasionally borrowing from the notation there. 

\begin{claim}[{\cite[Lemma 2.3.6]{DinurFR06}}]
\label{clm:influencenoisy}
For any $\eta \in [0,1]$ and function $f:V^n \to [-1,1]$, 
the sum of influences of the ``noisy function'' $N_\eta f$ satisfies
\[
\sum_{i=1}^n \inf_i(N_\eta f) \le (1-\eta^2)^{-2} \; .
\]
In particular, the number of variables that have influence at least $\tau$ on $N_\eta f$ is at most $(1-\eta^2)^{-2} / \tau$. 
\end{claim}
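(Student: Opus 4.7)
The plan is to express everything in a product orthonormal basis adapted to $\mu$ and reduce the bound to a one-line geometric-series estimate. First I would fix an orthonormal basis $\{\chi_a\}_{a\in V}$ of $L^2(V,\mu)$ with $\chi_0 \equiv 1$, and for $\alpha \in V^n$ set $\chi_\alpha(x) := \prod_i \chi_{\alpha_i}(x_i)$ and $|\alpha| := |\{i : \alpha_i \neq 0\}|$. Then $\{\chi_\alpha\}$ is an orthonormal basis of $L^2(V^n,\mu^{\otimes n})$, the noise operator (defined in the standard way by independently resampling each coordinate from $\mu$ with probability $1-\eta$) acts diagonally as $N_\eta \chi_\alpha = \eta^{|\alpha|}\chi_\alpha$, and for any $g = \sum_\alpha \widehat g(\alpha)\chi_\alpha$ the influence of coordinate $i$ is $\inf_i(g) = \sum_{\alpha:\alpha_i\neq 0} \widehat g(\alpha)^2$.

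With these standard identities in hand, I would swap the order of summation to obtain
\[
\sum_{i=1}^n \inf_i(N_\eta f) \;=\; \sum_{i=1}^n \sum_{\alpha:\alpha_i\neq 0} \eta^{2|\alpha|}\widehat f(\alpha)^2 \;=\; \sum_\alpha |\alpha|\,\eta^{2|\alpha|}\,\widehat f(\alpha)^2.
\]
The main estimate is then elementary: for every integer $k \geq 1$, since all summands are nonnegative,
\[
k\eta^{2k} \;\leq\; \sum_{j\geq 1} j\eta^{2j} \;=\; \frac{\eta^2}{(1-\eta^2)^2} \;\leq\; (1-\eta^2)^{-2}.
\]
Combined with Parseval's identity $\sum_\alpha \widehat f(\alpha)^2 = \|f\|_2^2 \leq 1$ (using $|f|\leq 1$), this immediately yields the stated bound $\sum_i \inf_i(N_\eta f) \leq (1-\eta^2)^{-2}$.

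The ``in particular'' clause is then a one-liner from Markov's inequality: if $m$ coordinates each satisfy $\inf_i(N_\eta f) \geq \tau$, then $m\tau$ is at most the total just bounded, hence $m \leq (1-\eta^2)^{-2}/\tau$. There is no real obstacle in this proof; the only point requiring mild care is that the computation is carried out for a general reversible Markov chain rather than the uniform measure on the Boolean cube, but the abstract orthonormal basis above makes the argument formally identical to the classical hypercube case.
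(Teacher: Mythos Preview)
Your proof is correct and is the standard Fourier-analytic argument. The paper itself does not include a proof of this claim --- it is simply quoted from \cite[Lemma~2.3.6]{DinurFR06} --- and your argument is precisely the one found there: expand in the product orthonormal basis, use that $N_\eta$ acts diagonally with eigenvalue $\eta^{|\alpha|}$, interchange sums to get $\sum_\alpha |\alpha|\,\eta^{2|\alpha|}\widehat f(\alpha)^2$, and combine the elementary bound $k\eta^{2k}\le \eta^2/(1-\eta^2)^2$ with Parseval.
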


\begin{lemma}[Two-function variant of {\cite[Lemma 2.5]{DinurFR06}}]
\label{lem:perturbnoiseip}
Let $\lambda = \lambda(A) < 1$ be the second absolute eigenvalue of $A$, and let $1-\lambda < \eta < 1$
be sufficiently close to $1$ so that
\begin{equation}
\label{eq:noisyperturb}
(1-\eta) \log_\lambda(1-\eta) \le \sqrt{1-\eta} \;.
\end{equation}
Then for any $f_1,f_2:V^n \to [-1,1]$, 
\[
\Big| \ip{f_1, Af_2} - \ip{g_1, Ag_2} \Big| \le \sqrt{1-\eta} \; ,
\]
where $g_i=N_\eta f_i$.
\end{lemma}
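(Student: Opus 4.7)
The plan is to diagonalize. Let $1 = \lambda_0, \lambda_1, \ldots$ be the eigenvalues of $A$ with corresponding $\mu$-orthonormal eigenfunctions $\phi_0 \equiv 1, \phi_1, \ldots$; by the definition of $\lambda$ we have $|\lambda_j| \le \lambda$ for every $j \ge 1$. Tensorising produces an orthonormal eigenbasis $\{\phi_S := \phi_{s_1} \otimes \cdots \otimes \phi_{s_n}\}$ of $A^{\otimes n}$ with eigenvalues $\prod_i \lambda_{s_i}$, and the noise operator acts diagonally in this basis via $N_\eta \phi_S = \eta^{|S|} \phi_S$, where $|S|$ denotes the number of coordinates on which $S$ is nonzero. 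Expanding $f_k = \sum_S \hat f_k(S)\, \phi_S$, orthonormality yields the identity
\[
\ip{f_1, Af_2} - \ip{N_\eta f_1,\, A N_\eta f_2}
 = \sum_S \bigl(1 - \eta^{2|S|}\bigr)\, \hat f_1(S)\, \hat f_2(S) \prod_{i=1}^n \lambda_{s_i},
\]
on which the rest of the proof operates.

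I would then split this sum at a threshold $k \ge 1$ on $|S|$. In the low-frequency range ($|S| \le k$) I use the elementary bound $1 - \eta^{2|S|} \le 2|S|(1-\eta) \le 2k(1-\eta)$ together with the trivial estimate $\bigl|\prod_i \lambda_{s_i}\bigr| \le 1$; in the high-frequency range ($|S| > k$) the spectral-gap bound $\bigl|\prod_i \lambda_{s_i}\bigr| \le \lambda^{|S|} \le \lambda^k$ is the workhorse. Since $|f_k| \le 1$ implies $\|f_k\|_2 \le 1$, Cauchy--Schwarz gives $\sum_S |\hat f_1(S) \hat f_2(S)| \le \|f_1\|_2 \|f_2\|_2 \le 1$, so the absolute value of the full expression is bounded by $2k(1-\eta) + \lambda^k$.

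To finish I would choose $k := \lceil \log_\lambda(1-\eta) \rceil$, which is a positive integer because the assumption $\eta > 1 - \lambda$ forces $\log_\lambda(1-\eta) > 1$. This choice makes $\lambda^k \le 1-\eta$, and the hypothesis \eqref{eq:noisyperturb} gives $2k(1-\eta) \le 2(1-\eta)\log_\lambda(1-\eta) + 2(1-\eta) \le 2\sqrt{1-\eta} + 2(1-\eta)$, which for $\eta$ close enough to $1$ is dominated by a constant multiple of $\sqrt{1-\eta}$. The main obstacle is cosmetic rather than structural: the crude optimisation above loses absolute constants, so to match the clean bound $\sqrt{1-\eta}$ as stated one must either rebalance the two error terms more carefully (for instance, by exploiting that the $|S|=0$ term contributes nothing and tightening the low-frequency estimate) or absorb a constant into \eqref{eq:noisyperturb}. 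Beyond this constant-chasing step, the argument needs nothing more than the spectral decomposition and Cauchy--Schwarz.
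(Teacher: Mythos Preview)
Your approach is essentially the paper's: expand in the product eigenbasis, use $N_\eta \phi_S = \eta^{|S|}\phi_S$, and bound $\sum_S |\hat f_1(S)\hat f_2(S)| \le 1$ via Cauchy--Schwarz. The paper organises it slightly differently---it pulls out $\max_S (1-\eta^{2|S|})|\lambda_S|$ before applying Cauchy--Schwarz and defers the verification that this maximum is at most $\sqrt{1-\eta}$ to \cite[Lemma~2.5]{DinurFR06}---which is exactly where the constant you are chasing lives; your threshold-at-$k$ argument is a correct (if slightly lossy) way to reproduce that calculation.
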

\begin{proof}
By decomposing the functions according to the eigenbasis of $A$,
\begin{align*}
\Big| \ip{f_1,A f_2} - \ip{g_1,A g_2} \Big| 
&= 
\Big| \sum_S (\hat{f}_1(S)\hat{f}_2(S) - \hat{g}_1(S)\hat{g}_2(S)) \lambda_S \Big| \\
&= \Big| \sum_S \hat{f}_1(S)\hat{f}_2(S) (1-\eta^{2|S|}) \lambda_S \Big| \\
&\le 
\sum_S | \hat{f}_1(S)\hat{f}_2(S)| \cdot \max_S (1-\eta^{2|S|}) |\lambda_S| \; .
\end{align*}
A straightforward calculation (see {\cite[Lemma 2.5]{DinurFR06}}) shows that the 
above maximum is at most $\sqrt{1-\eta}$. We can therefore complete the proof
by noting using Cauchy-Schwarz that $\sum_S | \hat{f}_1(S)\hat{f}_2(S)| \le \|f_1\|_2 \|f_2\|_2 \le 1$.
\end{proof}

\begin{lemma}[{\cite[Lemma 2.8]{DinurFR06}}]\label{lem:hypercontra}
There exists a $p=p(A)>2$ such that the following holds. 
For any $\eps > 0$, $j,\ell \ge 1$, and any function $L$ mapping each vertex $a \in V^j$
to a subset $L(a) \subseteq \N$ with $|L(a)| \le \ell$ and satisfying that for at least an $\eps$
measure of pairs $(a,b)$ in $V^j$, $L(a) \cap L(b) \neq \emptyset$, there exists an $i \in \N$ such that
\[
\mu(\{ a \in V^j \, : \, i \in L(a) \}) \ge (\eps/\ell^2)^{2p/(p-2)} \; .
\]
\end{lemma}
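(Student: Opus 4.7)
The plan is to combine the obvious union bound with a hypercontractive estimate for the product chain $A^{\otimes j}$. For each $i \in \N$, let $f_i : V^j \to \{0,1\}$ denote the indicator of $\{a \in V^j : i \in L(a)\}$, and set $\alpha_i := \E[f_i]$. Since $|L(a)| \le \ell$ for every $a$, we have the pointwise bound $\sum_i f_i \le \ell$, and in particular $\sum_i \alpha_i \le \ell$. Reading ``measure of pairs'' as the edge measure on $V^j\times V^j$ induced by $A^{\otimes j}$ (i.e., the weights $w_{(a,b)}$ of the preliminaries), the hypothesis, combined with the trivial union bound $\indic[L(a)\cap L(b)\neq \emptyset]\le \sum_i f_i(a)f_i(b)$, gives
\[
\eps \;\le\; \sum_i \ip{f_i, A^{\otimes j} f_i}.
\]

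Next I would upper-bound each $\ip{f_i, A^{\otimes j} f_i}$ by a super-linear power of $\alpha_i$ via hypercontractivity. Since $A$ is reversible, irreducible, and aperiodic on a finite state space with strictly positive stationary measure, it has a positive log-Sobolev constant, which tensorizes, so $A^{\otimes j}$ inherits the same constant uniformly in $j$. A standard consequence is that there exist $p = p(A) > 2$ and $C = C(A) < \infty$ such that for every non-negative $g : V^j \to \R$,
\[
\ip{g, A^{\otimes j} g} \;\le\; C\,\|g\|_2\,\|g\|_{p'}, \qquad p' := \tfrac{p}{p-1},
\]
which follows by Cauchy--Schwarz from the boundedness of $A^{\otimes j} : L^{p'}\to L^2$. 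Applied to $g = f_i$ this gives $\ip{f_i, A^{\otimes j} f_i} \le C\,\alpha_i^{c}$ with $c := \tfrac{1}{2} + \tfrac{1}{p'} = \tfrac{3}{2}-\tfrac{1}{p}$, so $c - 1 = \tfrac{p-2}{2p} > 0$.

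Setting $M := \sup_i \alpha_i$, the elementary estimate $\alpha_i^{c} \le M^{c-1}\,\alpha_i$ (valid since $\alpha_i \le M$ and $c > 1$) together with $\sum_i \alpha_i \le \ell$ give
\[
\eps \;\le\; C \sum_i \alpha_i^{c} \;\le\; C\,M^{c-1}\,\ell,
\]
which rearranges to $M \ge (\eps/(C\ell))^{1/(c-1)} = (\eps/(C\ell))^{2p/(p-2)}$. Absorbing the $A$-dependent constant $C$ (for instance by slightly shrinking $p$, or by the observation that for $\ell \ge C$ one has $(\eps/(C\ell))^{2p/(p-2)} \ge (\eps/\ell^2)^{2p/(p-2)}$, while the finitely many smaller values of $\ell$ are handled by further shrinking $p$) recovers the form $M \ge (\eps/\ell^2)^{2p/(p-2)}$ stated in the lemma.

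The main obstacle is obtaining the hypercontractive inequality for $A^{\otimes j}$ with a constant $p > 2$ that is uniform in the dimension $j$. This rests on the tensorization property of the log-Sobolev constant of the base chain $A$, which is classical but must be invoked carefully to yield a $j$-uniform bound; everything else in the argument is a routine union bound plus a concavity manipulation.
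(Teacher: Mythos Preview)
The paper does not give its own proof of this lemma; it is simply quoted from \cite{DinurFR06}. Your approach---union bound $\eps \le \sum_i \ip{f_i, A^{\otimes j} f_i}$ followed by a hypercontractive estimate $\ip{f_i,A^{\otimes j}f_i}\le \alpha_i^{c}$ with $c>1$---is exactly the argument used there, and the arithmetic you carry out is correct (in fact it yields the slightly stronger bound $(\eps/\ell)^{2p/(p-2)}$).

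One point deserves tightening. You invoke ``positive log-Sobolev constant, which tensorizes'' to get $\ip{g,A^{\otimes j}g}\le C\|g\|_2\|g\|_{p'}$ with $C$ independent of $j$. But a log-Sobolev inequality for the Dirichlet form of $A$ gives hypercontractivity of the \emph{semigroup} $e^{-t(I-A)}$, not of $A$ itself, and if you only had $\|Af\|_2\le C\|f\|_{p'}$ on $V$ with some $C>1$, tensorization would produce $C^j$, which is fatal. The clean route (and the one taken in \cite{DinurFR06}) is: let $\rho=\max_{k\neq 0}|\lambda_k|<1$ and let $T_\rho=\rho I+(1-\rho)\E$ be the noise operator on $(V,\mu)$. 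A coordinatewise spectral comparison gives $\|A^{\otimes j}f\|_2\le \|T_\rho^{\otimes j}f\|_2$ for every $j$. Since $T_\rho=e^{-s(I-\E)}$ with $s=-\log\rho>0$, Gross's theorem (using that any finite space with $\mu_{\min}>0$ has a positive log-Sobolev constant for the complete-graph Dirichlet form) gives a genuine contraction $T_\rho:L^{p'}\to L^2$ for some $p'<2$, and \emph{this} tensorizes with constant $1$. Hence $\ip{f_i,A^{\otimes j}f_i}\le \|f_i\|_2\|f_i\|_{p'}=\alpha_i^{1/2+1/p'}$ with no extraneous constant, and your ``absorbing $C$'' step becomes unnecessary.
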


\begin{lemma}[{\cite[Claim 3.1]{DinurFR06}}]
\label{lem:dfrclmonnoiseoutside}
For any $\eps > 0$, $\eta \le 1$, $j \ge 1$, and $f : V^j \to [0,1]$,
\[
\E_{x \in V^j}[\indic_{N_\eta f(x) \le \eps} f(x)] \le \eps \; .
\]
\end{lemma}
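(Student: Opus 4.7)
Set $B := \{x \in V^j : N_\eta f(x) \le \eps\}$, so that the target becomes $\E[\indic_B \cdot f] \le \eps$. My plan opens with the immediate Markov-type observation
\[
\E[\indic_B \cdot N_\eta f] \;\le\; \eps \cdot \mu(B) \;\le\; \eps,
\]
which follows directly from the defining inequality $N_\eta f \le \eps$ on $B$ together with $\mu(B) \le 1$. The task is then to transfer this bound from $N_\eta f$ back to $f$. The two structural tools I plan to use are (i) the self-adjointness of $N_\eta$ with respect to $\mu$, inherited from the reversibility of the underlying chain $A$, and (ii) the mean-preservation identity $\E[N_\eta f] = \E[f]$, which follows from $N_\eta \mathbf{1} = \mathbf{1}$. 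Combined with the hypothesis $f \in [0,1]$, these should be enough to close the short calculation.

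Concretely, applying self-adjointness to the display above yields the companion bound
\[
\E[f \cdot N_\eta \indic_B] \;=\; \E[\indic_B \cdot N_\eta f] \;\le\; \eps,
\]
and it then remains to argue $\E[f \cdot \indic_B] \le \eps$ from this. The main obstacle I expect is precisely this last step: the difference $\E[f \cdot (\indic_B - N_\eta \indic_B)]$ has no definite sign in general, so a naive pointwise comparison between $\indic_B$ and $N_\eta \indic_B$ is not available. The natural remedy, following standard DFR-style manipulations for noise operators, is to exploit the coordinate-wise product structure of $N_\eta$ on $V^j$ (as recorded via the Fourier identity $\widehat{N_\eta f}(S) = \eta^{|S|}\hat f(S)$ underlying Lemma~\ref{lem:perturbnoiseip}) to reduce the inequality to a single-coordinate statement, where the bounds $f \in [0,1]$ and $N_\eta f \ge 0$ make the required estimate essentially immediate, after which one integrates the one-coordinate inequality back up to $V^j$.
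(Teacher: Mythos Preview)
The paper does not prove this lemma itself; it cites \cite[Claim~3.1]{DinurFR06}, so there is no in-paper argument to compare against, and your proposal must stand on its own. Your first two displays are correct but are actually the \emph{same} inequality: self-adjointness gives $\E[f\cdot N_\eta\indic_B]=\E[\indic_B\cdot N_\eta f]$, so invoking it does not advance you toward the target $\E[\indic_B f]\le\eps$, and the entire content rests on your final paragraph. There the plan does not go through. The set $B=\{N_\eta f\le\eps\}$ is a sublevel set of the smoothed function and carries no product structure, so the tensor factorisation of $N_\eta$ says nothing useful about $\indic_B$. The natural coordinate-wise interpolation --- setting $f_k=N_\eta^{(1)}\cdots N_\eta^{(k)}f$ and aiming for the chain $\E[\indic_B f_0]\le\cdots\le\E[\indic_B f_j]\le\eps$ --- requires at each step that $\indic_B$ and $f_{k-1}$ be negatively correlated along coordinate $k$ for every fixing of the remaining coordinates, and this already fails for $j=2$: take $V=\{0,1\}$ uniform, $\eta=\tfrac12$, $f(0,0)=1$, $f(1,1)=\tfrac{1}{10}$, $f(0,1)=f(1,0)=0$, and $\eps=\tfrac{3}{20}$; then $B=\{(1,1)\}$ and $\E[\indic_B f_0]=\tfrac{1}{40}>\tfrac{3}{160}=\E[\indic_B f_1]$.

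So the gap is genuine: what is actually needed is precisely the inequality $\E[\indic_B f]\le\E[\indic_B\,N_\eta f]$, and neither self-adjointness nor the product structure of $N_\eta$ delivers it by the route you sketch. You should consult the short argument for Claim~3.1 in \cite{DinurFR06} rather than try to complete this outline.
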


\begin{proof}[Proof of Theorem~\ref{thm:bipartitedfr}]
Let $p=p(A)$ be as in Lemma~\ref{lem:hypercontra} and $\lambda = \lambda(A)$ be the second absolute eigenvalue of $A$. 
Fix some $\eps>0$, and let $\tau_\moo(\eps)$ and $\delta_\moo(\eps)$ be as given in Lemma~\ref{lem:quantmoo}. 
Choose $\eta < 1$ close enough to $1$ so that $\eta > 1-\lambda$, $2 \sqrt{1-\eta} \le \delta_\moo(\eps) \eps / 2$,
and Eq.~\eqref{eq:noisyperturb} holds. 
Let $c>0$ be large enough so that $\delta := \eps^c < \sqrt{1-\eta}$. 
Define
\[
\ell = \frac{2 (1-\eta^2)^{-2}}{\tau_\moo(\eps)}\; ,
\]
and choose $\gamma > 0$ small enough so that 
\[
 2 \gamma < \tau_\moo(\eps) \cdot (\eps/2 \ell^2)^{2p/(p-2)}\; .
\]

For $i \in \{1,2\}$, define $g_i = N_\eta f_i$. Let $j$ be the number of variables 
with influence greater than $\gamma$ on either $g_1$ or $g_2$, and assume 
without loss of generality that these are the variables $J:=\{1,\ldots,j\}$. 
By Claim~\ref{clm:influencenoisy} there are at most $2(1-\eta^2)^{-2}/\gamma$ 
such variables, so in particular, we can take $j = \eps^{-c}$ for large enough $c$, as required. 

For $a \in V^j$ define $g_{1,a}:V^{n-j} \to [0,1]$ by $g_{1,a}(x)=g_1(a,x)$ and similarly for $g_2$.
Let
\[
T_1 = \{ a \,:\, \E_x[g_{1,a}(x)] \ge \eps \} \subseteq V^j \; ,
\]
and similarly define $T_2$ with $g_{2,a}$. The condition in Eq.~\eqref{eq:outsidetlowexpect} now follows
from Lemma~\ref{lem:dfrclmonnoiseoutside}.

It remains to prove that $\ip{\indic_{T_1}, A\indic_{T_2}} \le \eps$. 
Assume towards contradiction that 
$\ip{\indic_{T_1}, A\indic_{T_2}} > \eps$.
Equivalently, the measure of pairs $(a,b)$ such that $a \in T_1$ and $b \in T_2$ is greater than $\eps$.
Notice that by Lemma~\ref{lem:perturbnoiseip}, 
\[
 \ip{g_1, A g_2} \le \ip{f_1, A f_2} + \sqrt{1-\eta} \le \delta + \sqrt{1-\eta} \le 2\sqrt{1-\eta} \; .
\]
Therefore, the measure of pairs $(a,b)$ for which $\ip{g_{1,a},A g_{2,b}} \ge \delta_\moo(\eps)$
is at most $2 \sqrt{1-\eta} / \delta_\moo(\eps) \le \eps/2$. It follows that there is at least
an $\eps/2$ measure of pairs $(a,b)$ for which $\E_x[g_{1,a}(x)] \ge \eps$, $\E_x[g_{2,b}(x)] \ge \eps$,
and $\ip{g_{1,a},A g_{2,b}} < \delta_\moo(\eps)$. 
By Lemma~\ref{lem:quantmoo}, for each such pair $(a,b)$, there exists an $i \in \{j+1,\ldots,n\}$ whose
influence on both $g_{1,a}$ and $g_{2,b}$ is greater than $\tau_\moo(\eps)$. 
We can now apply Lemma~\ref{lem:hypercontra} with the sets
\[
   L(a) = \{ j < i \le n \,:\, \max(\inf_i(g_{1,a}),\inf_i(g_{2,a})) >  \tau_\moo(\eps) \} \; ,
\]
whose cardinality is at most $\ell$ by Claim~\ref{clm:influencenoisy},
and obtain that there exists an $i \in \{j+1,\ldots,n\}$ for which
\[
\mu(\{ a \in V^j \, : \, i \in L(a) \}) \ge (\eps/2\ell^2)^{2p/(p-2)} > 2 \gamma / \tau_\moo(\eps)\; .
\]
From this it follows that $i$ has influence greater than $\gamma$ on either
$g_1$ or on $g_2$, in contradiction to the definition of $J$. 
\end{proof}

\section*{Acknowledgments} 
We thank Michael Overton for developing HANSO and Pooya Hatami for sending us an early draft of their work~\cite{HatamiST13}. 


\providecommand{\bysame}{\leavevmode\hbox to3em{\hrulefill}\thinspace}
\providecommand{\MR}{\relax\ifhmode\unskip\space\fi MR }
\providecommand{\MRhref}[2]{%
  \href{http://www.ams.org/mathscinet-getitem?mr=#1}{#2}
}
\providecommand{\href}[2]{#2}


\begin{dajauthors}
\begin{authorinfo}[friedgut]
  Ehud Friedgut\\
	Professor \\
  Faculty of Mathematics and Computer Science, Weizmann Institute of Science \\
	Rehovot, Israel\\
  \url{http://www.ma.huji.ac.il/~ehudf/}
\end{authorinfo}
\begin{authorinfo}[regev]
  Oded Regev\\
  Professor\\
  Courant Institute of Mathematical Sciences, New York University\\
  New York, NY, USA\\
  \url{http://www.cims.nyu.edu/~regev/}
\end{authorinfo}
\end{dajauthors}

\end{document}